\newtheorem{theorem}{Theorem}[section]
\newtheorem{lemma}{Lemma}[section]
\newtheorem{proposition}{Proposition}[section]
\newtheorem{corollary}{Corollary}[section]
\newtheorem{remark}{Remark}[section]
\newcommand{\set}[1]{\left\{ #1 \right\}}
\newcommand{\pred}[1]{\boldsymbol{1}[#1]}
\newcommand{\R}{\mathbb{R}}
\newcommand{\C}{\mathbb{C}}
\newcommand{\N}{\mathbb{N}}
\newcommand{\zero}{\boldsymbol{0}}
\newcommand{\expo}[1]{\exp\left( #1 \right)}
\DeclareMathOperator{\Bin}{Bin}
\newcommand{\abs}[1]{\left| #1 \right|}
\newcommand{\paren}[1]{\left( #1 \right)}
\newcommand{\nrm}[1]{\left\Vert #1 \right\Vert}
\newcommand{\hfnrm}[1]{\nrm{#1}_{1/2}}
\newcommand{\vertiii}[1]{{\left\vert\kern-0.25ex\left\vert\kern-0.25ex\left\vert #1 
    \right\vert\kern-0.25ex\right\vert\kern-0.25ex\right\vert}}
\newcommand{\tv}[1]{\nrm{#1}_{\textup{\tiny\textsf{TV}}}}
\DeclarePairedDelimiter\floor{\lfloor}{\rfloor}
\newcommand{\X}{\boldsymbol{X}}
\newcommand{\PR}[2][]{\mathop{\mathbb{P}}_{#1}\left( #2 \right)}
\newcommand{\E}[2][]{\mathop{\mathbb{E}}_{#1}\left[ #2 \right]}
\newcommand{\dist}{\bmu}
\newcommand{\empdist}{\widehat{\dist}_m}
\newcommand{\altest}{\tilde\bmu_m}
\newcommand{\disttr}[1]{\dist[#1]}
\newcommand{\empdisttr}{\empdist'}
\newcommand{\calE}{\mathcal{E}}
\newcommand{\eps}{\varepsilon}
\DeclareMathOperator{\Unif}{Uniform}
\newcommand{\rad}{\mathfrak{R}}
\newcommand{\emprad}{\hat{\mathfrak{R}}}
\newcommand{\risk}{\mathcal{R}}
\newcommand{\riskf}{\bar{\mathcal{R}}}
\newcommand{\bmu}{\boldsymbol{\mu}}
\newcommand{\bnu}{\boldsymbol{\nu}}
\newcommand{\bigO}{\mathcal{O}}
\newcommand{\kl}[2]{D_{
\mathrm{KL}
  }
  \left(#1 \middle| \middle| #2\right)}
\newcommand{\eqdef}{:=}
\newcommand{\bsigma}{\boldsymbol{\sigma}}
\newcommand{\dc}[1]{\todo[color=red!40, inline]{DC: #1}}
\newcommand{\gw}[1]{\todo[color=blue!40, inline]{GW: #1}}
\newcommand{\decr}[1]{{#1}^{\downarrow}}
\newcommand{\beq}{\begin{eqnarray*}}
\newcommand{\eeq}{\end{eqnarray*}}
\newcommand{\beqn}{\begin{eqnarray}}
\newcommand{\eeqn}{\end{eqnarray}}
\newcommand{\QED}{\hfill\ensuremath{\square}}
\title{Learning discrete distributions with infinite support}
\author{Doron Cohen, Aryeh Kontorovich, Geoffrey Wolfer  \\\small{\texttt{doronv@post.bgu.ac.il, karyeh@cs.bgu.ac.il, geoffrey@post.bgu.ac.il}}}
\begin{document}
\maketitle

\begin{abstract}
  We present a novel approach to estimating discrete distributions with (potentially) infinite support
in the total variation metric.
In a departure from the established paradigm, we make no structural assumptions whatsoever on the sampling
distribution.
In such a setting,
distribution-free risk bounds
are impossible, and the best one could hope for is a fully empirical data-dependent bound.
We derive precisely such bounds, and demonstrate that these are, in a well-defined sense, the best possible.
Our main discovery is that the half-norm
of the empirical distribution
provides tight upper and lower estimates on
the empirical risk.
Furthermore,
this quantity
decays at a nearly optimal rate as a function of the true distribution.
The optimality follows from a minimax result, of possible independent interest.
Additional structural results are
provided, including an exact Rademacher complexity calculation and apparently a first connection
between the total variation risk and the missing mass.

\end{abstract}

\section{Introduction}
\label{section:introduction}
Estimating a
discrete distribution
in the
total variation (TV) metric
is a central problem in computer science and statistics
(see, e.g., 
\citet{han-minimax-l1-2015,DBLP:conf/colt/KamathOPS15,DBLP:conf/nips/OrlitskyS15}
and the references therein).
The TV metric, which we use throughout the paper,
is a natural and abundantly motivated choice
\citep{MR1843146}.
For support size $d$,
a sample of size $\bigO(d/\eps^2)$
suffices for
the maximum-likelihood estimator (MLE)
to be $\eps$-close (with constant probability)
to the unknown target distribution.
A matching lower bound is known
\citep{MR1741038},
and has been computed
down to the exact constants \citep{DBLP:conf/colt/KamathOPS15}.

Classic VC theory --- and, in particular, the aforementioned results --- imply that
for infinite support, no distribution-free sample complexity bound is possible.
If $\dist$ is the target distribution and $\empdist$ is its empirical (i.e., MLE) estimate
based on $m$ iid samples, then \citet{berend2013sharp} showed that
\begin{equation}
\label{eq:BK}
\frac14\Lambda_m(\dist)-\frac1{4\sqrt{m}}
\;\le\;
\E{\tv{\dist-\empdist}}
\;\le\;
\Lambda_m(\dist)
,
\qquad m\ge 2,
\end{equation}
where
\begin{equation}
\label{eq:lamdef}
\Lambda_m(\dist)=
                 \sum_{j\in\N:
                 \dist(j)<  1/m
                 }
                 \dist(j)
+\frac1{2\sqrt m}\sum_{j\in\N:
               \dist(j)\ge1/m
               }
               \sqrt{\dist(j)}
.
\end{equation}
The quantity $\Lambda_m(\dist)$ has the advantage of
always
being finite
and of decaying to $0$ as $m\to\infty$.
The bound in (\ref{eq:BK})
suggests that $\Lambda_m(\dist)$, or a closely related measure,
controls the sample complexity
for learning discrete distributions in TV.
Further supporting the foregoing intuition is
the observation that
for finite support size $d$ and $m\gg1$, we have $\Lambda_m\lesssim\sqrt{d/m}$, recovering the known minimax rate.
Additionally, a closely related measure turns out to
control a minimax risk rate
in a sense made precise in 
Theorem~\ref{theorem:minimax-brief}.

One shortcoming of (\ref{eq:BK})
is that the lower bound only holds for the MLE, leaving the possibility that a different estimator
could achieve significantly improved bounds.
Another shortcoming of (\ref{eq:BK}) and related estimates is that they are not {\em empirical}, in that
they depend on the unknown quantity we are trying to estimate. A fully empirical bound, on the other hand, would give
a high-probability estimate on $\tv{\dist-\empdist}$ solely in terms of observable quantities such as $\empdist$.
Of course, such a bound should also be non-trivial, in the sense of
improving with growing sample size
and approaching $0$ as $m\to\infty$.
A further desideratum might be something akin to {\em instance optimality}: We would like the rate at which the empirical
bound decays to be ``the best'' possible for the given $\dist$, in an appropriate sense. Our analogue of
instance optimality is inspired by, but distinct from, that of \citet{valiant2016instance}, as discussed
in detail in Related work below.

\paragraph{Our contributions.}
We address
the shortcomings of
existing estimators 
detailed above
by providing a fully empirical
bound on
$\tv{\dist-\empdist}$.
Our main discovery is that the quantity
$\Phi_m(\empdist)\eqdef\frac1{\sqrt m}\sum_{j\in\N}\sqrt{\empdist(j)}$
satisfies all of the desiderata posed above for an empirical bound.
As we show in
Theorems~\ref{thm:emp-UB} and \ref{thm:emp-LB},
$\Phi_m(\empdist)$ provides tight, high-probability upper and lower bounds on $\tv{\dist-\empdist}$.
Further,
Theorem~\ref{thm:Phi-exp} shows that
$\E{\Phi_m(\empdist)}$
behaves as $\Lambda_m(\dist)$ defined in (\ref{eq:lamdef}).
Finally, a result in the spirit of instance optimality, Theorem~\ref{thm:Phi-opt},
shows that no other estimator-bound pair can improve upon $(\empdist,\Phi_m)$, other than by small constants.
The latter follows from a minimax bound of independent interest, Theorem~\ref{theorem:minimax-brief}.
Additional structural results are
provided, including an exact Rademacher complexity calculation and a connection (apparently the first)
between the total variation risk and the missing mass.

\paragraph{Definitions, notation and setting.}

As we are dealing with discrete distributions, there is no loss of generality in
taking our sample space
to be the natural numbers
$\N=\set{1, 2, 3, \dots}$.
For $k\in\N$, we write $[k]\eqdef\set{
i\in\N:i\le k
}$.
The set of all distributions
on $\N$
will be denoted by $\Delta_\N$,
which we enlarge to include
the
``deficient'' distributions:
$$
\Delta_\N
\subset
\Delta_\N^\circ
:=
\set{\dist\in[0,1]^\N: \sum_{i\in\N}\dist(i)\le1}
.$$
For $d\in\N$, we write $\Delta_d\subset\Delta_\N$
to denote those $\dist$ whose support is contained in $[d]$.

For $\dist\in\Delta_\N^\circ$ and $I\subseteq\N$,
we write $\dist(I)=\sum_{i\in I}\dist(i)$.
We define the {\em decreasing permutation}
of $
\dist\in\Delta_\N^\circ
$, denoted by $\decr{\dist}$,
to be the sequence $(\dist(i))_{i\in\N}$ sorted in non-increasing order,
achieved by a\footnote{
  While $\decr{\dist}$ is uniquely defined, $\decr{\Pi}_{\dist}$ is not. Uniqueness
  could be ensured by taking the lexicographically first permutation,
  but will not be needed for our results.
}
permutation $\decr{\Pi}_{\dist}:\N\to\N$;
thus, $\decr{\dist}(i)=\dist(\decr{\Pi}_{\dist}(i))$.
For 
$0 < \eta < 1$,
define $T_{\dist}(\eta)\in\N$
as the least $t$ for which
$\sum_{i >t}^{\infty} \decr{\dist}(i) < \eta$.
This induces a truncation of $\dist$, denoted by
$\disttr{\eta}\in\Delta_\N^\circ$
and defined by $\disttr{\eta}(i)=
\pred{\decr{\Pi}_{\dist}(i) \le T_{\dist}(\eta)}
\dist(i)
$.

For $\dist,\bnu\in\Delta_\N^\circ$, 
we define the \emph{total variation distance} in terms of the $\ell_1$ norm:
\begin{equation}
\label{eq:total-variation}
\tv{\dist -\bnu} \eqdef \frac{1}{2} \nrm{\dist -\bnu}_1
=\frac12\sum_{i\in\N}\abs{\dist(i)-\bnu(i)}
.
\end{equation}
For $\dist \in \Delta_\N^\circ$, we also define the \emph{half-norm}\footnote{
  The half-norm
is not a
proper
vector-space norm, as it lacks sub-additivity. 
} as
\begin{equation}
\label{eq:half-norm}
\hfnrm{\dist} \eqdef \left(\sum_{i \in \Omega} \sqrt{\dist(i)} \right)^2;
\end{equation}
note that
while
$\hfnrm{\dist}$
may be infinite,
we have
$\hfnrm{\dist}\le\nrm{\dist}_0$,
where
the latter denotes the support size.

For $m \in \N$ and $\dist \in \Delta_\N$,
we write
$\X = (X_1, \dots, X_m) \sim \dist^{m}$
to mean that the components of the vector $\X$
are drawn iid from from $\dist$.
We reserve $\empdist \in \Delta_\N$
for
the empirical measure induced
by
the sample $\X$, i.e.
$
\empdist(i) \eqdef \frac{1}{m}\sum_{t \in [m]}\pred{X_t = i}$;
the term MLE will be used interchangeably.

For the class of boolean functions over the integers $\set{f \colon \N \to \set{0, 1}}$, 
which we denote by $\{0,1\}^\N$,
recall the definition of
the 
\emph{empirical Rademacher complexity} \citep[Definition~3.1]{mohri2012ml} conditional on the sample $\X$:
\begin{equation}
\label{eq:empirical-rademacher-complexity}
\emprad_m(\X) \eqdef \E[\boldsymbol{\sigma}]{\sup_{f\in \{0,1\}^\N}\frac{1}{m} \sum_{t=1}^m \sigma_t f(X_t)},
\end{equation}
where $\boldsymbol{\sigma} = (\sigma_1, \dots, \sigma_m)
\sim\Unif(\set{-1,1}^m)
$.
The expectation of the above random quantity is the
\emph{Rademacher complexity} \citep[Definition~3.2]{mohri2012ml}:
\begin{equation}
\label{eq:rademacher-complexity}
\rad_m \eqdef \E[\X \sim \dist^{m}]{\emprad(\X)}.
\end{equation}

\paragraph{Related work.}
Given the classical nature of the problem, a comprehensive literature survey is beyond our scope;
the standard texts \citet{MR780746,MR1843146} provide much of the requisite background.
Chapter 6.5 of the latter makes a compelling case for the TV metric used in this paper,
but see \citet{DBLP:conf/innovations/Waggoner15} and the works cited therein for results on other $\ell_p$ norms.
Though surveying all of the relevant literature is a formidable task,
a relatively streamlined narrative may be distilled.
Conceptually, the simplest case is that of $\nrm{\dist}_0<\infty$ (i.e., finite support).
Since learning a distribution over $[d]$ in TV is equivalent to agnostically learning the function class
$\set{0,1}^d$,
standard VC theory
\citep{MR1741038,KonPin2019}
entails
that the MLE achieves the minimax risk rate of $\sqrt{d/m}$ over all $\dist\in\Delta_\N$ with
$\nrm{\dist}_0\le d$.
An immediate consequence is
that in order to obtain
quantitative risk rates for the case of infinite support,
one must assume some sort of structure \citep{DBLP:books/crc/p/Diakonikolas16}.
One can, for example, obtain minimax rates for $\dist$ with bounded entropy
\citep{han-minimax-l1-2015}, or, say, bounded half-norm (as we do here).
Alternatively, one can restrict one's attention to a finite class $\mathcal{Q}\subset\Delta_\N$;
here too, optimal results are known \citep{DBLP:conf/colt/BousquetKM19}.
\citet{berend2013sharp} was one of the few works that made no assumptions
on $\dist\in\Delta_\N$, but only gave non-empirical bounds.

Our work departs from the paradigm of a-priori constraints on the unknown sampling distribution.
Instead, our estimates hold for all $\dist\in\Delta_\N$. Of course, this must come at a price:
no a-priori sample complexity bounds are possible in this setting.
Absent any prior knowledge regarding $\dist$,
one can only hope for sample-dependent
{\em empirical} bounds, and we indeed obtain these. Further, our empirical bounds are essentially the best possible,
as formalized in Theorem~\ref{thm:Phi-opt}. The latter result may be thought of as a learning-theoretic analogue of being
{\em instance-optimal}, as introduced by \citet{valiant2017automatic} in the testing framework.
Instance optimality is
a very natural notion in the context of testing whether an unknown sampling distribution $\dist$ is identical to or $\eps$-far
from a given reference one, $\dist_0$.
For example, \citeauthor{valiant2017automatic} discovered that a truncated $2/3$-norm of $\dist_0$ ---
i.e., a quantity closely related to
$\nrm{\dist_0}_{2/3}$ --- controls the complexity of the testing problem in TV distance.
Instance optimality is more difficult to formalize for distribution learning, since for any given $\dist\in\Delta_\N$,
there is a trivial ``learner'' with $\dist$ hard-coded inside.
\citet{valiant2016instance} defined this notion in terms of competing against an oracle who knows the distribution
up to a permutation of the atoms, and did not provide empirical confidence intervals.
We do derive fully empirical bounds, and further show that
they
are impossible
to improve upon --- by {\em any estimator} --- other than by constants.
Our results suggest that
the half-norm $\hfnrm{\dist}$
plays a role in learning analogous to that of $\nrm{\dist}_{2/3}$ in testing. As an intriguing aside,
we note that
the half-norm
corresponds to the Tsallis $q$-entropy with $q=1/2$, which was shown to be an optimal
regularizer in some stochastic and adversarial bandit settings \citep{DBLP:conf/aistats/ZimmertS19}. We leave the question
of investigating a deeper connection between the two results for future work.

\section{Main results}
\label{section:main-res}
In this section, we formally state our main results.
Recall from the Definitions
that the sample $\X = (X_1, \dots, X_m) \sim \dist^{m}$ induces the
empirical measure (MLE) $\empdist$,
and that a 
key quantity in our bounds is
\begin{equation}
  \label{eq:Phidef}
  \Phi_m(\empdist) \;=\;
\frac{1}{\sqrt{m}}\hfnrm{\empdist}^{1/2}
\;=\;
\frac1{\sqrt m}\sum_{j\in\N}\sqrt{\empdist(j)}.
\end{equation}  

Our first result is a fully empirical, high-probability upper bound on $\tv{\empdist - \dist}$ in terms
of $\Phi_m(\empdist)$:
\begin{theorem}
\label{thm:emp-UB}
For all $m\in\N$, $\delta\in(0,1)$, and $\dist\in\Delta_\N$, we have that
\beq
\tv{\empdist - \dist}
&\leq&
\Phi_m(\empdist)
+3\sqrt{\frac{\log\frac{2}{\delta}}{2m}}
\eeq
holds with probability at least $1-\delta$.
We also have
\beq
\E{\tv{\empdist - \dist}} &\le&  \E{\Phi_m(\empdist)}.
\eeq
\end{theorem}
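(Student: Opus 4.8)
The plan is to use the variational description $\tv{\empdist-\dist}=\sup_{A\subseteq\N}\bigl(\dist(A)-\empdist(A)\bigr)$ (valid since $\empdist,\dist$ are both probability measures), which displays $\tv{\empdist-\dist}$ as a supremum of the empirical process indexed by $\{0,1\}^\N$, identifying a set with its indicator. The expectation bound will be handled by symmetrization together with an explicit evaluation of the Rademacher complexity, and the high-probability bound is then obtained by chaining $\tv{\empdist-\dist}$, its mean, and $\Phi_m(\empdist)$ through two applications of McDiarmid's bounded-differences inequality.

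For the expectation bound, the standard symmetrization inequality gives $\E{\tv{\empdist-\dist}}=\E{\sup_A(\dist(A)-\empdist(A))}\le 2\rad_m$, so it remains to bound $\rad_m$. Conditioning on $\X$ and grouping $\frac1m\sum_t\sigma_t f(X_t)$ by the distinct symbols $i$ present in the sample, the supremum over $f\in\{0,1\}^\N$ decouples across symbols (only finitely many are active for a fixed sample) and equals $\frac1m\sum_i\bigl(\sum_{t:X_t=i}\sigma_t\bigr)^+$. Taking $\E[\bsigma]{\cdot}$, using $\E[\bsigma]{(S)^+}=\tfrac12\E[\bsigma]{\abs S}$ for a symmetric sum $S$ and $\E[\bsigma]{\abs{\sum_{t=1}^k\sigma_t}}\le\sqrt k$ (Cauchy--Schwarz), yields $\emprad_m(\X)\le\frac1{2m}\sum_i\sqrt{m\empdist(i)}=\tfrac12\Phi_m(\empdist)$. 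Hence $\rad_m\le\tfrac12\E{\Phi_m(\empdist)}$, and therefore $\E{\tv{\empdist-\dist}}\le\E{\Phi_m(\empdist)}$, which is the second assertion; this is essentially the exact Rademacher computation alluded to in the introduction, with the Khintchine bound kept exact.

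For the high-probability bound, apply McDiarmid twice. The map $\X\mapsto\tv{\empdist-\dist}$ has bounded differences $1/m$: changing one coordinate moves two atoms of $\empdist$ by $1/m$, hence $\tfrac12\sum_j\abs{\empdist(j)-\dist(j)}$ by at most $1/m$; so with probability at least $1-\delta/2$, $\tv{\empdist-\dist}\le\E{\tv{\empdist-\dist}}+\sqrt{\log(2/\delta)/(2m)}$. Similarly $\X\mapsto\Phi_m(\empdist)=\frac1m\sum_j\sqrt{m\empdist(j)}$ has bounded differences $2/m$ --- two affected counts, with $\abs{\sqrt{n+1}-\sqrt n}\le1$ and $\abs{\sqrt{n-1}-\sqrt n}\le1$ --- so with probability at least $1-\delta/2$, $\E{\Phi_m(\empdist)}\le\Phi_m(\empdist)+2\sqrt{\log(2/\delta)/(2m)}$. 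On the intersection of the two events (union bound: probability at least $1-\delta$), chaining the three inequalities through $\E{\tv{\empdist-\dist}}\le\E{\Phi_m(\empdist)}$ gives $\tv{\empdist-\dist}\le\Phi_m(\empdist)+3\sqrt{\log(2/\delta)/(2m)}$; the constant $3=1+2$ simply records the two bounded-differences constants.

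The genuinely routine ingredients are symmetrization and McDiarmid; the only points requiring a little care are the per-symbol decoupling of the Rademacher supremum over the infinite class $\{0,1\}^\N$, and the fact that $\Phi_m$ has bounded-differences constant $2/m$ rather than $1/m$, which is exactly what produces the coefficient $3$ (a finer estimate of $\sqrt{n+1}-\sqrt n$ would improve it, but we do not pursue this). A symmetrization-free alternative for the expectation bound is the coordinatewise inequality $\tfrac12\E{\abs{Y-\E{Y}}}\le\E{\sqrt Y}$ for $Y\sim\Bin(m,p)$, summed over atoms; on that route, verifying this inequality by a short convexity argument would be the main obstacle.
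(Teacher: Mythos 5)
Your proof is correct and follows the same core strategy as the paper's: the variational form of TV as an empirical process indexed by $\{0,1\}^\N$, symmetrization, the per-symbol decoupling of the Rademacher sum, and the Cauchy--Schwarz/Khintchine step $\E[\bsigma]{\abs{\sum_{t=1}^{k}\sigma_t}}\le\sqrt{k}$ yielding $\emprad_m(\X)\le\tfrac12\Phi_m(\empdist)$, which is exactly the paper's Lemma~\ref{lemma:empirical-rademacher-estimates}. The only organizational difference is in the high-probability step: the paper first establishes $\tv{\empdist-\dist}\le 2\emprad_m(\X)+3\sqrt{\log(2/\delta)/(2m)}$ by citing the standard Rademacher generalization bound (so the $3$ arises as $1+2\cdot 1$, from concentrating the sup and the $1/m$-bounded-differences statistic $\emprad_m$, the latter doubled) and then converts $\emprad_m$ to $\Phi_m$ pointwise, whereas you concentrate $\Phi_m$ directly via its $2/m$ bounded differences and chain through $\E{\tv{\empdist-\dist}}\le\E{\Phi_m(\empdist)}$, getting $3=1+2$ by a different accounting. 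Both are valid; your bounded-differences constants ($1/m$ for the TV statistic, $2/m$ for $\Phi_m$, the latter being exactly what the paper uses in its proof of Theorem~\ref{thm:Phi-exp}) check out, so this is a legitimate, slightly more self-contained variant of the same argument.
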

Since $
\hfnrm{\empdist}
\le
\nrm{\empdist}_0\le\nrm{\dist}_0$, this recovers the minimax rate of $\sqrt{d/m}$ for $\dist\in\Delta_\N$
with $\nrm{\dist}_0\le d$.
We also provide a matching lower bound:
\begin{theorem}
\label{thm:emp-LB}
For all $m\in\N$, $\delta\in(0,1)$, and $\dist\in\Delta_\N$, we have that
\beq
\tv{\empdist - \dist}
&\ge&
\frac1{4\sqrt2}
\Phi_m(\empdist)
- 3\sqrt{\frac{\log\frac{2}{\delta}}{m}}
\eeq
holds with probability at least $1-\delta$.
\end{theorem}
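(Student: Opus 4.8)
The plan is to reduce to an inequality between expectations, prove that, and then lift it to a high‑probability statement by concentration. Writing $B_j \eqdef m\empdist(j)\sim\Bin(m,\dist(j))$, both sides decompose over atoms: $2\,\E{\tv{\empdist-\dist}}=\sum_j\E{\abs{\empdist(j)-\dist(j)}}=\tfrac1m\sum_j\E{\abs{B_j-m\dist(j)}}$, while $\E{\Phi_m(\empdist)}=\tfrac1{\sqrt m}\sum_j\E{\sqrt{\empdist(j)}}=\tfrac1m\sum_j\E{\sqrt{B_j}}$ (all series being of nonnegative terms, so the interchange with $\E{\cdot}$ is automatic). Hence it suffices to show the per‑atom inequality $\E{\abs{B-mp}}\ge\tfrac1{2\sqrt2}\,\E{\sqrt B}$ for $B\sim\Bin(m,p)$ with $p\le\tfrac12$. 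At most one atom of $\dist$ can have $\dist(j)>\tfrac12$, and such an atom contributes at most $\tfrac1{\sqrt m}$ to $\E{\Phi_m(\empdist)}$ (since $\sqrt{\empdist(j)}\le1$); so summing the per‑atom inequality over the remaining atoms and adding back this one term gives the intermediate bound $\E{\tv{\empdist-\dist}}\ge\tfrac1{4\sqrt2}\left(\E{\Phi_m(\empdist)}-\tfrac1{\sqrt m}\right)$.

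For the per‑atom inequality I would split on the magnitude of $\mu\eqdef mp$. If $\mu\le1$, then $\sqrt B\le B$ pointwise (a nonnegative integer dominates its square root), so Cauchy--Schwarz together with Markov gives $\E{\sqrt B}=\E{\sqrt B\,\boldsymbol{1}[B\ge1]}\le\sqrt{\E B\cdot\PR{B\ge1}}\le\mu$, whereas $\E{\abs{B-\mu}}=2\,\E{(\mu-B)^+}\ge2\mu\,\PR{B=0}=2\mu(1-p)^m\ge\tfrac\mu2$ using $(1-\tfrac1m)^m\ge\tfrac14$ for $m\ge2$ (with $m=1$ trivial since the theorem's right side is then negative); this yields the claim with room to spare. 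If $\mu>1$, Jensen gives $\E{\sqrt B}\le\sqrt\mu$, and $\Var{B}=\mu(1-p)\ge\tfrac\mu2$ since $p\le\tfrac12$, so it suffices to know that $\E{\abs{B-\mu}}\ge\tfrac12\sqrt{\Var{B}}$ in this regime, i.e.\ that a binomial of mean at least one has mean absolute deviation of the same order as its standard deviation; this is exactly the content of the sharp two‑sided binomial mean‑absolute‑deviation estimate of \citet{berend2013sharp} (the same estimate underlying \eqref{eq:BK}), and invoking it closes the case.

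To pass from expectations to high probability, observe that $\tv{\empdist-\dist}$ and $\Phi_m(\empdist)$ are functions of the iid sample with small bounded differences: altering one $X_t$ changes two coordinates of $\empdist$ by $\tfrac1m$ each, which moves $\tv{\empdist-\dist}$ by at most $\tfrac1m$ and --- using $\abs{\sqrt a-\sqrt b}\le\sqrt{\abs{a-b}}$ --- moves $\sqrt m\,\Phi_m(\empdist)=\sum_j\sqrt{\empdist(j)}$ by at most $\tfrac2{\sqrt m}$, hence moves $\Phi_m(\empdist)$ by at most $\tfrac2m$. McDiarmid's inequality then gives, each with probability at least $1-\tfrac\delta2$, that $\tv{\empdist-\dist}\ge\E{\tv{\empdist-\dist}}-\sqrt{\tfrac{\log(2/\delta)}{2m}}$ and that $\Phi_m(\empdist)\le\E{\Phi_m(\empdist)}+\sqrt{\tfrac{2\log(2/\delta)}{m}}$. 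A union bound and the intermediate expectation inequality then yield, with probability at least $1-\delta$, $\tv{\empdist-\dist}\ge\tfrac1{4\sqrt2}\Phi_m(\empdist)-\tfrac14\sqrt{\tfrac{\log(2/\delta)}{m}}-\sqrt{\tfrac{\log(2/\delta)}{2m}}-\tfrac1{4\sqrt2\sqrt m}$, and since $\log(2/\delta)>\log2>\tfrac12$ the three error terms sum to less than $3\sqrt{\log(2/\delta)/m}$, which is the claimed bound (with slack).

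The step I expect to be the crux is the per‑atom inequality, and in particular tracking its constant. Routing instead through $\Lambda_m(\dist)$ and the lower bound of \eqref{eq:BK} is too lossy --- the Jensen bound $\E{\sqrt{B_j}}\le\sqrt{m\dist(j)}$ alone already costs a constant factor on the heavy atoms, and the outer comparison $\E{\Phi_m(\empdist)}\lesssim\Lambda_m(\dist)$ costs another --- so one is forced to compare $\E{\abs{B_j-m\dist(j)}}$ with $\E{\sqrt{B_j}}$ directly. The delicate regime is $m\dist(j)\asymp1$, where neither the Poisson nor the Gaussian approximation is sharp and the fine binomial estimate does the real work.
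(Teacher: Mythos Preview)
Your proof is correct and takes a genuinely different route from the paper's.

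The paper never works per atom. Instead, it proves the lower bound entirely through Rademacher complexity: Wainwright's desymmetrization inequality (his Proposition~4.12) gives $\tv{\empdist-\dist}\ge\tfrac12\rad_m-\tfrac1{2\sqrt m}-\sqrt{2\log(2/\delta)/m}$ with high probability, McDiarmid then replaces $\rad_m$ by $\emprad_m(\X)$, and finally Khintchine's inequality (with Haagerup's sharp constant $A_1=1/\sqrt2$) converts $\emprad_m(\X)$ to $\tfrac1{2\sqrt2}\Phi_m(\empdist)$. Your argument bypasses Rademacher complexity altogether: you compare $\E{\abs{B_j-m\dist(j)}}$ with $\E{\sqrt{B_j}}$ atom by atom, sum, and then invoke McDiarmid on each side separately. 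This is more elementary and arguably more transparent, since it makes clear that the phenomenon is really about binomial mean absolute deviation rather than symmetrization.

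One caution on constants. In the regime $\mu>1$ you need $\E{\abs{B-\mu}}\ge\tfrac12\sqrt{\Var B}$, and you assert that this is ``exactly the content'' of the Berend--Kontorovich estimate underlying \eqref{eq:BK}. The inequality is true (the ratio $\E{\abs{B-\mu}}/\sqrt{\Var B}$ is at least about $2/e>1/2$ on that range), but the constant $1/4$ appearing in \eqref{eq:BK} corresponds to the weaker per-atom bound $\E{\abs{B-\mu}}\ge\tfrac14\sqrt\mu$, so your citation does not immediately deliver the $1/2$ you need. You should either cite the sharper form explicitly or supply a short direct proof for $\mu\ge1$; otherwise the leading constant in front of $\Phi_m$ drops from $\tfrac1{4\sqrt2}$ to $\tfrac18$. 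By contrast, the paper's route gets the $\tfrac1{2\sqrt2}$ factor for free from Haagerup's $A_1$, with no case analysis on $\mu$ --- that is the main thing the Rademacher detour buys.
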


Our empirical measure $\Phi_m(\empdist)$ is never much worse than
the non-empirical
$\Lambda_m(\dist)$, defined in (\ref{eq:lamdef}):
\begin{theorem}
\label{thm:Phi-exp}
For all $m\in\N$ and $\dist\in\Delta_\N$ we have
\beq
\E{\Phi_m(\empdist)}
&\le&
2\Lambda_m(\dist)
\eeq
and,
with probability at least $1-\delta$,
\beq
\Phi_m(\empdist)
&\leq&
2\Lambda_m(\dist) + 
\sqrt{\log({1}/{\delta})/{m}}
.
\eeq

\end{theorem}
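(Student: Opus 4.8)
The plan is to establish the expectation bound first, by a per-atom analysis, and then upgrade it to the high-probability statement via a bounded-differences (McDiarmid) concentration argument.

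For the expectation bound, write $N_j\eqdef m\,\empdist(j)=\sum_{t\in[m]}\pred{X_t=j}$, so that $N_j$ is $\Bin(m,\dist(j))$-distributed (marginally) and $\Phi_m(\empdist)=\frac1m\sum_{j\in\N}\sqrt{N_j}$; note in particular that $\Phi_m(\empdist)\le\sqrt m$ always, so $\E{\Phi_m(\empdist)}$ is finite even though $\frac1{\sqrt m}\sum_j\sqrt{\dist(j)}$ may diverge. By linearity, $\E{\Phi_m(\empdist)}=\frac1m\sum_j\E{\sqrt{N_j}}$, and I would control each term in two complementary ways: by Jensen and concavity of $t\mapsto\sqrt t$, $\E{\sqrt{N_j}}\le\sqrt{\E{N_j}}=\sqrt{m\,\dist(j)}$; and since $N_j$ is a nonnegative integer, $\sqrt{N_j}\le N_j$ pointwise, whence $\E{\sqrt{N_j}}\le\E{N_j}=m\,\dist(j)$. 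Splitting $\N$ into the ``heavy'' atoms $H=\set{j:\dist(j)\ge1/m}$ and the ``light'' atoms $L=\set{j:\dist(j)<1/m}$, and applying the first bound on $H$ and the second on $L$, gives
\[
\E{\Phi_m(\empdist)}\;\le\;\sum_{j\in L}\dist(j)\;+\;\frac1{\sqrt m}\sum_{j\in H}\sqrt{\dist(j)},
\]
and comparing with the definition (\ref{eq:lamdef}) of $\Lambda_m(\dist)$ — whose first sum is exactly $\sum_{j\in L}\dist(j)$ and whose second sum is $\frac1{2\sqrt m}\sum_{j\in H}\sqrt{\dist(j)}$ — the right-hand side is at most $2\Lambda_m(\dist)$, as claimed.

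For the high-probability bound, I would view $g(\X)\eqdef\Phi_m(\empdist)=\frac1m\sum_j\sqrt{N_j(\X)}$ as a function of the iid sample and invoke McDiarmid's inequality. The one computation that matters is the bounded-differences constant: replacing a single coordinate $X_t=a$ of the sample by an arbitrary value $b$ (producing a modified sample $\X'$) changes only the counts $N_a$ and $N_b$, by $-1$ and $+1$ respectively (and if $b=a$ nothing changes), so
\[
g(\X')-g(\X)=\frac1m\Big[\big(\sqrt{N_b+1}-\sqrt{N_b}\big)-\big(\sqrt{N_a}-\sqrt{N_a-1}\big)\Big].
\]
Each bracketed term equals $\frac{1}{\sqrt{k+1}+\sqrt k}\in(0,1]$ for the relevant integer $k\ge0$, so their difference lies in $(-1,1)$ and $\abs{g(\X)-g(\X')}\le1/m$. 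McDiarmid then gives $\PR{\Phi_m(\empdist)-\E{\Phi_m(\empdist)}\ge t}\le\expo{-2mt^2}$; taking $t=\sqrt{\log(1/\delta)/(2m)}$ and combining with the expectation bound yields, with probability at least $1-\delta$,
\[
\Phi_m(\empdist)\;\le\;2\Lambda_m(\dist)+\sqrt{\tfrac{\log(1/\delta)}{2m}}\;\le\;2\Lambda_m(\dist)+\sqrt{\tfrac{\log(1/\delta)}{m}}.
\]

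The only genuinely delicate point is the expectation bound — more precisely, recognizing that one cannot apply Jensen uniformly: the naive estimate $\E{\Phi_m(\empdist)}\le\frac1{\sqrt m}\sum_j\sqrt{\dist(j)}$ is vacuous whenever $\dist$ has a heavy enough tail for its half-norm to be infinite, whereas $\Lambda_m(\dist)$ is always finite. The heavy/light dichotomy — switching to the linear bound $\sqrt{N_j}\le N_j$ exactly on the atoms too small to be reliably observed in $m$ draws — is what tames the tail, and it is also the source of the constant $2$; the remaining steps (the McDiarmid application and the bookkeeping of constants) are routine.
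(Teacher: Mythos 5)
Your proof is correct and takes essentially the same route as the paper's: the expectation bound via the per-atom combination of Jensen's inequality with $\sqrt{N}\le N$ for integer counts (equivalently, the paper's $\min\{\sqrt{m\dist(i)},m\dist(i)\}$), followed by McDiarmid for the high-probability statement. Your bounded-differences constant of $1/m$ is in fact sharper than the $2/m$ the paper invokes, so your concentration term $\sqrt{\log(1/\delta)/(2m)}$ comfortably implies the stated $\sqrt{\log(1/\delta)/m}$.
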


Furthermore, no other estimator-bound pair $(\altest,\Psi_m)$ can improve upon
$(\empdist,\Phi_m)$, other than by a constant.
This is the ``instance optimality'' result
alluded to above:
\begin{theorem}
\label{thm:Phi-opt}
There exist universal constants $a, b > 0$ such that the following holds.
For any
estimator-bound
pair $(\altest,\Psi_m)$ 
and any \emph{continuous} %
function $\theta \colon \R_+ \to \R_+$ such that
\beq
\E{\tv{\altest - \dist}}
\;\leq\;
\E{\Psi_m(\altest)}
\;\leq\; \theta \left( \E{\Phi_m(\empdist)} \right)
\eeq
holds
for all $\dist \in \Delta_\N$,
$\theta$ necessarily verifies
\beq
\inf_{0< x < b}  \frac{\theta(x)}{x} \geq \frac{1}{a} %
.
\eeq
\end{theorem}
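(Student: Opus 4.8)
The plan is to prove the contrapositive: if $\theta$ grows sublinearly near the origin in the sense that $\inf_{0<x<b}\theta(x)/x$ can be made arbitrarily small, then no estimator-bound pair $(\altest,\Psi_m)$ can satisfy the chain of inequalities for all $\dist\in\Delta_\N$. The key is that the hypothesis forces $\E{\tv{\altest-\dist}}\le\theta(\E{\Phi_m(\empdist)})$, and by Theorem~\ref{thm:Phi-exp} we have $\E{\Phi_m(\empdist)}\le 2\Lambda_m(\dist)$, so $\E{\tv{\altest-\dist}}\le\theta(2\Lambda_m(\dist))$ (using monotonicity of $\theta$, which I would first argue may be assumed WLOG by replacing $\theta$ with $x\mapsto\sup_{y\le x}\theta(y)$). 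Thus the existence of such a $\theta$ with small $\inf\theta(x)/x$ would yield an estimator achieving TV risk much smaller than $\Lambda_m(\dist)$ on a whole family of instances — contradicting the minimax lower bound, Theorem~\ref{theorem:minimax-brief}.

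The heart of the argument is therefore to exhibit, for each small target rate, a rich enough family of distributions on which any estimator must incur risk proportional to $\Lambda_m$ (equivalently, to $\E{\Phi_m(\empdist)}$), yet on which $\E{\Phi_m(\empdist)}$ itself is as small as we like. Concretely, I would fix $m$ and, for a parameter $d$, consider the family $\mathcal{F}_d$ of distributions supported on $d$ atoms — e.g. uniform-like distributions on a $d$-subset of $\N$, or the minimax-hard packing used to prove Theorem~\ref{theorem:minimax-brief}. On this family $\E{\Phi_m(\empdist)}\asymp\Lambda_m\asymp\sqrt{d/m}$ and is thus small when $d\ll m$; meanwhile the minimax TV risk over $\mathcal{F}_d$ is $\gtrsim\sqrt{d/m}$ as well. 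If $\theta$ satisfied $\theta(x)\le cx$ for all small $x$ with $c$ tiny, then combining with $\E{\tv{\altest-\dist}}\le\theta(2\Lambda_m(\dist))\le 2c\,\Lambda_m(\dist)$, averaging over the worst-case $\dist\in\mathcal{F}_d$ (or invoking the minimax formulation directly), gives $2c\gtrsim 1$, pinning down $c$ from below and hence forcing $\inf_{0<x<b}\theta(x)/x\ge 1/a$ for appropriate absolute constants $a,b$. The constant $b$ enters because we only control $\theta$ on the range of values $\E{\Phi_m(\empdist)}$ actually takes, i.e.\ on $(0,b)$ for some $b$ bounded away from the trivial regime.

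There are two technical points to handle carefully. First, the quantifier order: the hypothesis is ``for all $\dist$,'' and the estimator $\altest$ and bound $\Psi_m$ are fixed in advance, so a single minimax lower bound over a family $\mathcal{F}_d$ suffices — I do not need a Bayesian prior argument beyond what Theorem~\ref{theorem:minimax-brief} already packages, but I should confirm that Theorem~\ref{theorem:minimax-brief} is stated in a form that lower-bounds $\inf_{\altest}\sup_{\dist}\E{\tv{\altest-\dist}}$ against $\Lambda_m$ or the half-norm, and quote it as a black box. Second, continuity of $\theta$: it is used only to pass from ``$\theta$ is small at the specific points $2\Lambda_m(\dist)$ for $\dist\in\mathcal{F}_d$, as $d$ ranges'' to ``$\inf_{0<x<b}\theta(x)/x$ is small,'' since as $m$ and $d$ vary the values $\E{\Phi_m(\empdist)}$ sweep a dense subset of an interval; continuity then makes the pointwise smallness an infimum statement over the continuum $0<x<b$. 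I expect the main obstacle to be bookkeeping the constants through the three-way comparison $\E{\tv{\altest-\dist}}\le\theta(2\Lambda_m)$, $\Lambda_m\asymp\sqrt{d/m}$, and the minimax lower bound $\gtrsim\sqrt{d/m}$ — in particular making sure the ``$2$'' from Theorem~\ref{thm:Phi-exp} and the minimax constant combine into clean universal $a,b$ independent of $m$, which requires choosing the family $\mathcal{F}_d$ so that $\Lambda_m$ and the minimax rate match up to a constant uniformly in $m$ (taking $d\le m$, so we stay in the regime where $\Lambda_m\asymp\sqrt{d/m}$).
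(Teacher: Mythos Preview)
Your approach is essentially the paper's: restrict to $\Delta_d$, invoke the minimax lower bound $\inf_{\bar\dist}\sup_{\dist\in\Delta_d}\E{\tv{\bar\dist-\dist}}\gtrsim\sqrt{d/m}$ (the paper derives this from Lemma~\ref{lemma:minimax-lower-bound-tsybakov} plus Markov, not directly from Theorem~\ref{theorem:minimax-brief}, which is phrased in high probability), confront it with the hypothesis $\E{\tv{\altest-\dist}}\le\theta(\E{\Phi_m})$ together with $\E{\Phi_m}\le\sqrt{d/m}$, deduce $\theta(\sqrt{d/m})\ge\frac{1}{a}\sqrt{d/m}$, and extend to all of $(0,b)$ by density of $\{\sqrt{d/m}:d\in2\N,\ m\ge d/b^2\}$ and continuity of $\theta$.

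Two remarks on differences. First, the detour through $\Lambda_m$ via Theorem~\ref{thm:Phi-exp} is unnecessary: the paper uses the one-line bound $\E{\Phi_m(\empdist)}\le\sqrt{d/m}$ (from $\hfnrm{\empdist}\le\nrm{\empdist}_0\le d$), which is simpler and avoids carrying the factor~$2$. Second, your proposed ``WLOG monotone'' reduction does not work as stated: replacing $\theta$ by $\tilde\theta(x)=\sup_{y\le x}\theta(y)$ and proving $\inf_{0<x<b}\tilde\theta(x)/x\ge 1/a$ does \emph{not} yield $\inf_{0<x<b}\theta(x)/x\ge 1/a$, since $\tilde\theta\ge\theta$ is the wrong direction for that conclusion. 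That said, the paper's own proof silently uses monotonicity at the same step, namely $\sup_{\dist\in\Delta_d}\theta(\E{\Phi_m})\le\theta(\sqrt{d/m})$; a fully watertight version would either add nondecreasing to the hypotheses on $\theta$, or also lower-bound $\E{\Phi_m}$ on the Tsybakov packing so that the relevant argument of $\theta$ is pinned on both sides.
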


The next result, framed in the high-probability setting, draws a direct parallel between
our characterization of the learning sample complexity via
the half-norm
and
\citet{valiant2017automatic}'s
characterization of the testing sample complexity
via the $2/3$-norm.
The truncation is needed to ensure
finiteness, since the $\hfnrm{\dist}=\infty$ for heavy-tailed distributions (e.g. $\dist(i) \propto 1/i^2$).

  \begin{theorem}
  \label{theorem:minimax-brief}
  There is a
universal constant  
  $C>0$ such that
for all $\Lambda \geq 2$ and 
$0 < \eps, \delta < 1$,
the MLE
$\empdist
$
verifies the following optimality property:
For all $\dist \in \Delta_\N$
with
$\hfnrm{\disttr{2 \eps \delta /9}} \leq \Lambda$,
we have
\beq
m \geq {C}\eps^{-2} \max \set{ \Lambda, \log(1/\delta) }
\implies
\PR{\tv{\empdist - \dist} < \eps}\ge1 - \delta.
\eeq

On the other hand,
for
{\em any} estimator
$\altest \colon \N^m \to \Delta_\N$
there is a
$\dist \in \Delta_\N$ 
with
$$
\max\set{
  \hfnrm{\disttr{\eps / 18}}
  ,
  \hfnrm{\disttr{2 \eps \delta / 9}}
}
\leq \Lambda$$
such that:
\beq
m <
{C} \eps^{-2}\min \set{ \Lambda, \log(1/\delta) }
\implies
\PR{
  \tv{\altest - \dist} \ge \eps}\ge \min\set{3/4,1-\delta}
.
\eeq

\end{theorem}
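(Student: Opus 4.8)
The statement is a two-sided optimality claim, and I would attack the positive (the MLE is rate-optimal) and negative (no estimator beats it) halves separately.

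\emph{Positive half.} The plan is to route the hypothesis $\hfnrm{\disttr{\eta}}\le\Lambda$, $\eta=2\eps\delta/9$, through the already-established chain Theorem~\ref{thm:emp-UB} $\Rightarrow$ Theorem~\ref{thm:Phi-exp}, so that everything reduces to bounding $\Lambda_m(\dist)$ from (\ref{eq:lamdef}). Split $\dist$ into its head $\disttr{\eta}$ (a deficient distribution with $\sum_{j\in\mathrm{head}}\sqrt{\dist(j)}=\hfnrm{\disttr{\eta}}^{1/2}\le\sqrt\Lambda$) and a tail of mass $<\eta$. In $\Lambda_m(\dist)$: each head atom with $\dist(j)<1/m$ is charged $\dist(j)=\sqrt{\dist(j)}\sqrt{\dist(j)}<\sqrt{\dist(j)}/\sqrt m$, so the head contributes in total at most $\tfrac32\sqrt{\Lambda/m}$ to the two sums; each light tail atom is charged its own mass (total $<\eta$), and the at most $m\eta$ heavy tail atoms contribute, by Cauchy--Schwarz, at most $\tfrac1{2\sqrt m}\sqrt{m\eta}\,\sqrt\eta=\tfrac12\eta$. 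Hence $\Lambda_m(\dist)\le\tfrac32\sqrt{\Lambda/m}+\tfrac32\eta$. Plugging this into the high-probability bounds of Theorems~\ref{thm:emp-UB} and~\ref{thm:Phi-exp} (union bound at level $\delta/2$ each) gives, with probability $\ge1-\delta$, $\tv{\empdist-\dist}\le 3\sqrt{\Lambda/m}+\tfrac{2\eps\delta}{3}+\bigO(\sqrt{\log(1/\delta)/m})$; since $\delta<1$ the middle term is $<2\eps/3$, and since $\max\{\Lambda,\log(1/\delta)\}\ge\Lambda\ge2$, taking $C$ large enough in $m\ge C\eps^{-2}\max\{\Lambda,\log(1/\delta)\}$ forces the first and third terms below $\eps/6$ each, so the sum is $<\eps$.

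\emph{Negative half, the $\Lambda$-regime.} When $\Lambda\lesssim\log(1/\delta)$ I would invoke the classical packing lower bound for learning a distribution on $d=\Theta(\Lambda)$ atoms: take a constant-rate binary code and the perturbed-uniform family $\{\dist_\sigma\}$, calibrated so distinct members are $\ge2\eps$-apart in TV while all pairwise KL divergences are $\bigO(\eps^2)$; Fano then forces every estimator to err by $\ge\eps$ with probability $\ge3/4$ once $m<c\Lambda/\eps^2$ (the corner of small $\Lambda$, and of $\eps$ bounded away from $0$, is handled by a finite localized packing whose sharp failure constant is available, e.g.\ via \citet{DBLP:conf/colt/KamathOPS15}). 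Each $\dist_\sigma$ is within a constant factor of uniform on $d$ atoms, so $\hfnrm{\dist_\sigma}=\bigO(d)$; since truncation only deletes atoms, $\hfnrm{\disttr{\eps/18}},\hfnrm{\disttr{2\eps\delta/9}}\le\hfnrm{\dist_\sigma}\le\Lambda$ after fixing the constant in $d=\Theta(\Lambda)$. Since here $\log(1/\delta)\gtrsim\Lambda\ge2$ forces $\delta$ below an absolute constant, $3/4\ge\min\{3/4,1-\delta\}$, and the regime overlap with the next case is harmless.

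\emph{Negative half, the $\log(1/\delta)$-regime, and the main obstacle.} When $\log(1/\delta)\lesssim\Lambda$ I would build a ``many weak coins'' instance: index $\dist_\sigma$ by a hidden string $\sigma\in\{0,1\}^k$ with $k=\Theta(\log(1/\delta))$; partition the support into $k$ two-atom blocks, each carrying mass $\approx1/k$ tilted by a bias $\gamma=K\eps$ for a large absolute constant $K$ (legitimate once $\eps$ is below an absolute threshold so that $\gamma<1/2$; the complementary range of $\eps$, where $\eps^{-2}\log(1/\delta)=\bigO(\log(1/\delta))$, is disposed of by a finite packing). A direct computation gives $\tv{\altest-\dist_\sigma}\ge(\gamma/k)\,\mathrm{Ham}(\sigma,\hat\sigma)$ where $\hat\sigma_i$ reads off the more-probable atom of block $i$ under $\altest$, so success forces $\mathrm{Ham}(\sigma,\hat\sigma)<\tau:=\eps k/\gamma=k/K$. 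Conditioned on a sample of size $m<c\,\eps^{-2}\log(1/\delta)$ the posterior on $\sigma$ factorizes over blocks; by pigeonhole at least $k/2$ blocks receive $\bigO(1/\gamma^2)=\bigO(1/\eps^2)$ samples, and each such block's coordinate of $\sigma$ is then mis-predicted with conditional probability $\ge1/4$, independently — so $\mathrm{Ham}(\sigma,\hat\sigma)$ conditionally stochastically dominates $\mathrm{Binomial}(k/2,1/4)$, which by a Chernoff bound exceeds $\tau$ (for $K$ large enough that $\tau$ sits well below the mean, and $k\gtrsim\log(1/\delta)$) with probability $\ge1-\delta$; integrating over the sample yields overall failure probability $\ge1-2\delta$, and rescaling $\delta$ finishes. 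Near-uniformity on $\Theta(k)\le\Theta(\Lambda)$ atoms again makes all truncated half-norms $\le\Lambda$ (a heavy tail of mass $<\eps/18$ may additionally be appended to produce hard instances with $\hfnrm{\dist}=\infty$, which is why both truncation levels appear in the statement). The genuine difficulty, and what dictates this tensorized design, is upgrading a constant-probability minimax bound to the high-probability $(1-\delta)$ regime: the five parameters $\gamma$, $k$, $\tau$, the light-block sample threshold, and the sample budget $m$ must be chosen simultaneously so that the pigeonhole step and both Chernoff estimates close at once, with the small-support corner and the non-small-$\eps$ boundary cases as the remaining bookkeeping.
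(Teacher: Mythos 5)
Your positive half is correct but takes a genuinely different route from the paper. The paper proves the upper bound directly: truncate $\dist$ and $\empdist$ at level $\eta=2\eps\delta/9$, write $\tv{\empdist-\dist}\le\calE_1+\calE_2+\calE_3$, control the empirical tail $\calE_1$ by Markov, the deterministic tail $\calE_3$ by $\eps\delta/9$, and the truncated term $\calE_2$ by Jensen ($\E{\calE_2}\le\sqrt{\Lambda/m}$) plus McDiarmid. You instead reuse Theorems~\ref{thm:emp-UB} and~\ref{thm:Phi-exp} and bound $\Lambda_m(\dist)\le\tfrac32\sqrt{\Lambda/m}+\tfrac32\eta$ by a head/tail split; your estimates (including the Cauchy--Schwarz count of at most $m\eta$ heavy tail atoms) check out and the constants close, so this is a legitimate, arguably more economical alternative that recycles the paper's earlier machinery. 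Your $\Omega(\Lambda/\eps^2)$ lower bound is the same construction as the paper's Lemma~\ref{lemma:minimax-lower-bound-tsybakov} (perturbed uniform on $\Theta(\Lambda)$ atoms, Varshamov--Gilbert packing); using Fano to reach failure probability $3/4$ rather than the paper's Tsybakov bound, which caps at $1/2$, is actually what the brief statement's $\min\set{3/4,1-\delta}$ requires, and the half-norm/truncation bookkeeping is right.

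The $\log(1/\delta)$-regime is where you diverge, in two ways. First, you are proving more than the statement asks: $\min\set{3/4,1-\delta}\le 3/4$ for every $\delta$, and the premise $m<C\eps^{-2}\min\set{\Lambda,\log(1/\delta)}$ already places $m$ below the $\Lambda$-threshold, so your packing bound alone covers the brief statement; the refined Theorem~\ref{theorem:minimax-long} only claims that $m\lesssim\eps^{-2}\log(1/\delta)$ forces failure probability at least $\delta$ (not $1-\delta$), and the paper obtains this from a two-line LeCam two-point (biased coin) argument, Lemma~\ref{lemma:minimax-lower-bound-delta}. Second, your tensorized ``many weak coins'' construction, which targets failure probability $1-\delta$, has a genuine gap at its core step: the claim that each under-sampled block's bit is mispredicted with conditional probability at least $1/4$, independently. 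Conditioned on the data the posterior on $\sigma$ does factorize and the coordinate errors are conditionally independent, but their parameters are $\min\set{p_i,1-p_i}$ with $p_i$ the block-$i$ posterior, and a block receiving $n_i\asymp1/\gamma^2$ samples typically shows an imbalance of order $\sqrt{n_i}$, hence log-posterior-odds of order $\gamma\sqrt{n_i}=\Theta(1)$ with an uncontrolled constant; so $\min\set{p_i,1-p_i}$ can sit well below $1/4$ for a constant fraction of blocks and samples. The argument is repairable (condition on enough ``balanced'' blocks and retune the five parameters you list), but as written the stochastic-domination step does not go through --- and none of this machinery is needed for the theorem as stated.
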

The above is a simplified statement chosen for brevity;
a
considerably refined
version is stated and proved in Theorem~\ref{theorem:minimax-long}.

\section{Proofs}
\label{section:proofs}
\subsection{Proof of Theorem~\ref{thm:emp-UB}}
The proof consists of two parts. The first is contained in Lemma~\ref{lemma:tv-empirical-rademacher-upper},
which provides a high-probability empirical upper bound, and an expectation bound, similar to Theorem~\ref{thm:emp-UB}, but in terms of $\emprad_m(\X)$ instead of $\Phi_m(\empdist)$.
The second part, contained in Lemma~\ref{lemma:empirical-rademacher-estimates}, provides an estimate of $\emprad_m(\X)$ in terms of $\Phi_m(\empdist)$.
\begin{lemma}
\label{lemma:tv-empirical-rademacher-upper}
	For all $m\in\N$, $\delta\in(0,1)$, and $\dist\in\Delta_\N$, we have that $$\tv{\empdist - \dist} \leq 2\emprad_m(\X) +3\sqrt{\frac{\log\frac{2}{\delta}}{2m}}$$ holds with probability at least $1-\delta$. We also have,
	\begin{align}
	\label{eq:exp-tv-rad}
		\E{\tv{\empdist - \dist}} \leq  2 \rad_m .
	\end{align}
\end{lemma}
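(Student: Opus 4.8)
The plan is to recognize $\tv{\empdist - \dist}$ as a uniform deviation of empirical means over the boolean class $\{0,1\}^\N$ and then run the textbook symmetrization-plus-bounded-differences argument. First I would recall that for discrete (signed) measures the total variation distance satisfies $\tv{\empdist - \dist} = \sup_{A \subseteq \N} \abs{\empdist(A) - \dist(A)} = \sup_{f \in \{0,1\}^\N}\paren{\E[X\sim\dist]{f(X)} - \tfrac1m\sum_{t=1}^m f(X_t)}$, where the absolute value may be dropped because $\{0,1\}^\N$ is closed under complementation $f \mapsto 1-f$. Although the index set $\{0,1\}^\N$ is uncountable, the value of this empirical process depends on $f$ only through its restriction to the at most $m$ distinct sample points (off the sample, taking $f \equiv 1$ is always optimal since it only increases $\E{f}$), so the supremum is attained over an effectively finite set and no measurability obstruction arises; the same remark applies to $\emprad_m(\X)$, where $f$ enters only via its values on the sample.

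Second, I would establish the expectation bound $\E{\tv{\empdist - \dist}} = \E{\phi(\X)} \le 2\rad_m$, where $\phi(\X) \eqdef \sup_{f \in \{0,1\}^\N}\paren{\E[X\sim\dist]{f(X)} - \tfrac1m\sum_{t=1}^m f(X_t)}$, by the standard ghost-sample symmetrization: introduce an independent copy $\X' \sim \dist^m$, write $\E[X\sim\dist]{f(X)} = \E[\X']{\tfrac1m\sum_t f(X'_t)}$, pull $\E[\X']{\cdot}$ outside the supremum by Jensen, note that swapping $X_t \leftrightarrow X'_t$ preserves the joint law so that Rademacher signs $\bsigma \sim \Unif(\set{-1,1}^m)$ may be inserted, and split the resulting sum into two identically-distributed halves. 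This gives $\E{\phi(\X)} \le 2\,\E[\X,\bsigma]{\sup_{f} \tfrac1m\sum_t \sigma_t f(X_t)} = 2\rad_m$, which is precisely \eqref{eq:exp-tv-rad}.

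Third, for the high-probability statement I would invoke McDiarmid's bounded-differences inequality twice. Altering a single coordinate of $\X$ changes $\phi(\X)$ by at most $1/m$ and changes $\emprad_m(\X)$ by at most $1/m$, so each of $\phi$ and $\emprad_m$ concentrates around its mean with Gaussian tail of scale $1/\sqrt m$. Using confidence $1-\delta/2$ for each and a union bound yields, simultaneously, $\phi(\X) \le \E{\phi(\X)} + \sqrt{\log(2/\delta)/(2m)}$ and $\rad_m = \E{\emprad_m(\X)} \le \emprad_m(\X) + \sqrt{\log(2/\delta)/(2m)}$. Chaining these through $\E{\phi(\X)} \le 2\rad_m$ contributes the coefficients $1$ and $2$ in front of $\sqrt{\log(2/\delta)/(2m)}$, for a total of $3$, and delivers $\tv{\empdist - \dist} \le 2\emprad_m(\X) + 3\sqrt{\log(2/\delta)/(2m)}$ with probability at least $1-\delta$.

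I expect the only genuine delicate point to be the bookkeeping around the uncountable index class $\{0,1\}^\N$: one must certify that $\phi$ and $\emprad_m$ are measurable functions of the sample and that symmetrization and McDiarmid legitimately apply. This is dispatched by the reduction to the finite subclass of boolean functions determined by the sample values noted above (equivalently, by a monotone approximation using finitely-supported restrictions together with monotone convergence); once that is in hand, every remaining step is routine empirical-process machinery.
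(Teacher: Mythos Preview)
Your proposal is correct and follows the same route as the paper: the paper identifies $\tv{\empdist-\dist}$ with the uniform deviation over $\{0,1\}^\N$ and then invokes \citet[Theorem~3.3]{mohri2012ml} (whose proof is precisely the ghost-sample symmetrization plus two McDiarmid applications you spell out), together with the symmetrization in \citet[eqs.~(3.8)--(3.13)]{mohri2012ml} for the expectation bound. Your treatment of the measurability issue for the uncountable index class is a welcome addition that the paper leaves implicit.
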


\begin{proof}
The high-probability bound  from the observation,
\begin{align}
\label{eq:tv-uc}
\tv{\empdist - \dist} \eqdef \sup_{A \subseteq \N} \left(\dist(A) - \empdist(A) \right) = 
\sup_{f\in \mathcal{F}} \left(\E[X\sim \dist]{f(X)}  - \frac{1}{m}\sum_{i=1}^{m}f(X_i) \right)
\end{align}
where $\mathcal{F} \eqdef \left\{\mathbb{I}_A | A \subseteq \N \right\} = \{0,1\}^\N$ , combined with
\cite[Theorem~3.3]{mohri2012ml}, which states: 
Let $\mathcal{G}$ be a family of functions from $\mathcal{Z}$ to $[0,1]$ and let $\bnu$ be a distribution supported on a subset of $\mathcal{Z}$. Then, for any $\delta > 0$ , with probability at least $1-\delta$ over $\boldsymbol{Z}=(Z_1, \dots, Z_m) \sim \bnu^{m}$, the following holds:
$$\sup_{g\in \mathcal{G}} \left(\E[Z\sim \bnu]{g(Z)}  - \frac{1}{m}\sum_{i=1}^{m}g(Z_i) \right) \leq 2\emprad_m(\boldsymbol{Z}) +3\sqrt{\frac{\log\frac{2}{\delta}}{2m}}.$$
Plugging in $\mathcal{F}$ for $\mathcal{G}$ and $\dist$ for $\bnu$ in the above theorem completes the proof of the high-probability bound.
The expectation bound (eq. (\ref{eq:exp-tv-rad})) follows from the observation at eq. (\ref{eq:tv-uc}) and a symmetrization argument \cite[eq. (3.8) to (3.13)]{mohri2012ml}.
\end{proof}

In order to complete the proof, we apply
\begin{lemma}[Empirical Rademacher estimates]
\label{lemma:empirical-rademacher-estimates}
	Let $\X=(X_1, \dots, X_m)$ and let $\empdist$ be the empirical measure constructed from the sample $\X$.
	Then, $$\frac{1}{2\sqrt{2}}
        \Phi_m(\empdist) \leq \emprad_m(\X) \leq \frac{1}{2}
        \Phi_m(\empdist).$$
\end{lemma}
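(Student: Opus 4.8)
The plan is to exploit the fact that $f$ ranges over \emph{all} of $\{0,1\}^\N$, which makes the supremum in \eqref{eq:empirical-rademacher-complexity} decouple across the distinct symbols seen in the sample. Fix a sign vector $\bsigma$ and, for each $j\in\N$, set $S_j\eqdef\sum_{t:\,X_t=j}\sigma_t$; this is a signed sum of exactly $n_j\eqdef m\,\empdist(j)$ of the coordinates of $\bsigma$. Then $\sum_{t=1}^m\sigma_t f(X_t)=\sum_{j\in\N}f(j)\,S_j$, and since each value $f(j)\in\{0,1\}$ may be chosen independently, the inner supremum equals $\sum_{j\in\N}(S_j)_+$ (attained by $f(j)=1$ exactly when $S_j>0$). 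Because each $S_j$ is a symmetric random variable, $\E[\bsigma]{(S_j)_+}=\tfrac12\E[\bsigma]{\abs{S_j}}$, so
\[
\emprad_m(\X)\;=\;\frac1{2m}\sum_{j\in\N}\E[\bsigma]{\abs{S_j}},
\]
a finite sum (it runs over the at most $m$ symbols actually present in $\X$). Since $\sqrt{n_j}=\sqrt m\,\sqrt{\empdist(j)}$, so that $\Phi_m(\empdist)=\tfrac1m\sum_{j}\sqrt{n_j}$, the lemma reduces to the two pointwise estimates $\sqrt{n/2}\le\E{\abs{S_n}}\le\sqrt n$ for every $n\ge1$, where $S_n$ denotes a sum of $n$ i.i.d.\ Rademacher signs.

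The upper estimate is immediate: $\E{\abs{S_n}}\le\sqrt{\E{S_n^2}}=\sqrt n$ by Jensen (equivalently Cauchy--Schwarz). Summing over the symbols gives $\emprad_m(\X)\le\tfrac1{2m}\sum_j\sqrt m\,\sqrt{\empdist(j)}=\tfrac12\,\Phi_m(\empdist)$.

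For the lower estimate I would first record the elementary recursion obtained by conditioning on the last sign: since $\tfrac12\paren{\abs{x+1}+\abs{x-1}}=\max(\abs{x},1)$ for every real $x$, and $S_n$ is integer-valued,
\[
\E{\abs{S_{n+1}}}\;=\;\E{\max(\abs{S_n},1)}\;=\;\E{\abs{S_n}}+\PR{S_n=0}.
\]
In particular $\E{\abs{S_{2k}}}=\E{\abs{S_{2k-1}}}$ (an odd signed sum never vanishes), so it suffices to bound the even terms; telescoping the recursion from $n=1$ gives
\[
\E{\abs{S_{2k}}}\;=\;1+\sum_{j=1}^{k-1}\binom{2j}{j}4^{-j}.
\]
Bounding each central binomial term by $\binom{2j}{j}4^{-j}\ge\tfrac1{2\sqrt j}$ (a one-line induction, or a Wallis-type estimate) and comparing the resulting sum with $\int_1^{k}x^{-1/2}\,dx=2\sqrt k-2$ yields $\E{\abs{S_{2k}}}\ge\sqrt k$, i.e.\ $\E{\abs{S_n}}\ge\sqrt{n/2}$ for all $n\ge1$. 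Summing over the symbols, $\emprad_m(\X)\ge\tfrac1{2m}\sum_j\sqrt{n_j/2}=\tfrac1{2\sqrt2}\,\Phi_m(\empdist)$.

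The one step needing genuine care is the \emph{sharpness} of the lower estimate: the bound $\E{\abs{S_n}}\ge\sqrt{n/2}$ is tight, with equality at $n=2$, so there is no slack to spare --- a crude moment-interpolation argument (controlling $\E{S_n^2}$ via $\E{\abs{S_n}}$ and $\E{S_n^4}$) would only deliver the weaker constant $\tfrac1{2\sqrt3}$, which is why the telescoping identity together with the optimal central-binomial bound is the right tool. The needed inequality is exactly the equal-weights case of the sharp $L^1$--$L^2$ Khintchine inequality (best constant $1/\sqrt2$, due to Szarek); everything else in the argument is bookkeeping.
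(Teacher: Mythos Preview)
Your argument is correct and follows the same overall architecture as the paper: decouple the supremum across the distinct symbols to obtain $\emprad_m(\X)=\tfrac{1}{2m}\sum_j\E[\bsigma]{|S_j|}$, then bound $\E{|S_n|}$ above and below by constant multiples of $\sqrt n$. The paper obtains both the upper and lower estimates by invoking Khintchine's inequality with Haagerup's sharp constants $A_1=1/\sqrt2$, $B_1=1$ as a black box. Your upper bound via Jensen is the same content as $B_1=1$. For the lower bound you instead give a self-contained derivation of the equal-weights case (Szarek's constant) through the recursion $\E{|S_{n+1}|}=\E{|S_n|}+\PR{S_n=0}$, the resulting telescoping identity $\E{|S_{2k}|}=1+\sum_{j=1}^{k-1}\binom{2j}{j}4^{-j}$, and an integral comparison. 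This buys you an elementary proof that does not rely on the full Khintchine--Haagerup machinery, at the cost of a bit more bookkeeping; the paper's route is shorter but less self-contained.
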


\begin{proof} 
The proof is based on an argument that was also developed 
in \citep[Section~7.1, Appendix E.]{scott2006} in the context of histograms and dyadic decision trees, 
and that was credited to Gilles Blanchard.\\
Let $\hat{S}=\left\{X_i|i \in [m]\right\}$ be the empirical support according to the sample $\X=\left(X_1,X_2,...,X_m\right)$.
Then,

\begin{equation*}
\begin{split}
&m\emprad_m (\X) =  \E[\boldsymbol{\sigma}]{\sup_{f \in \{0,1\}^\N} \sum_{i=1}^m \sigma_i f(X_i)} 
  =  \E[\boldsymbol{\sigma}]{ \sup_{A \subseteq \hat{S}} \sum_{i=1}^m \sigma_i \mathbb{I}_A(X_i)}
 \\
  & \qquad =  \sum_{x\in \hat{S}} \E[\boldsymbol{\sigma}]{\sup_{A \subseteq \{x\}} \sum_{i:X_i=x} \sigma_i \mathbb{I}_A(X_i)}
  =  \sum_{x\in \hat{S}} \E[\boldsymbol{\sigma}]{ \left( \sum_{i:X_i=x} \sigma_i \right)_{+}} 
  =  \sum_{x\in \hat{S}} \frac{1}{2} \E[\boldsymbol{\sigma}]{ \left| \sum_{i=1}^{m\empdist(x)} \sigma_i \right| },
\end{split}
\end{equation*}

where the last equality follows from counting $\{i:X_i=x\}$ and the symmetry of the random variable $\sum_{i=1}^{m} \sigma_i$ for all $n\in\N$.
Now, by Khintchine's inequality, for $0<p<\infty$ and $x_1,x_2,...,x_m \in \C$  we have
\beq
\label{eq:khin}
A_p \paren{\sum_{i=1}^m \abs{x_i}^2}^{1/2}
\leq \paren{\E[\boldsymbol{\sigma}]{\abs{\sum_{i=1}^{m} x_i \sigma_i}^p}}^{1/p}
\leq B_p \paren{\sum_{i=1}^m \abs{x_i}^2}^{1/2},
\eeq
where $A_p,B_p > 0$ are constants depending on $p$. Sharp values for $A_p,B_p$ were found by \citet{haagerup1981}. In particular, for $p=1$ he found that $A_1=\frac{1}{\sqrt{2}}$ and $B_1=1$. By using Khintchine's inequality for each $\E[\boldsymbol{\sigma}]{ \left| \sum_{i=1}^{m\empdist(x)} \sigma_i \right| }$ with these constants, we get
$$
\frac{1}{\sqrt{2}} \sqrt{m\empdist(x)}
\leq \E[\boldsymbol{\sigma}]{ \left| \sum_{i=1}^{m\empdist(x)} \sigma_i \right| }
\leq \sqrt{m\empdist(x)}
,
$$
and hence
$$
\frac{1}{2\sqrt{2}} \sum_{x\in \hat{S}}  \sqrt{m\empdist(x)}
\leq m\emprad_m (\X)
\leq \frac{1}{2} \sum_{x\in \hat{S}} \sqrt{m\empdist(x)}
.
$$

Dividing by $m$ completes the proof.
\end{proof}

\noindent Remark: We also give an exact expression for $\emprad_m (\X)$ in Lemma~\ref{lemma:empirical-rademacher-exact}, 
and show in Corollary~\ref{corollary:empirical-rademacher-first-order} with a more delicate analysis that 

$$\frac{\hfnrm{\empdist}^{1/2}}{\sqrt{2\pi m}} - \frac{3}{2}\sqrt{\frac{1}{2\pi }} \frac{1}{m^{3/2}} \nrm{\empdist^+}_{-1/2}^{-1/2} \leq \emprad_m(\X) \leq \frac{\hfnrm{\empdist}^{1/2}}{\sqrt{2\pi m}} + \sqrt{\frac{1}{2\pi }} \frac{1}{m^{3/2}} \nrm{\empdist^+}_{-1/2}^{-1/2}.$$

\subsection{Proof of Theorem~\ref{thm:emp-LB}}
The proof follows from applying the lower bound of Lemma~\ref{lemma:empirical-rademacher-estimates} to the following lemma:

\begin{lemma}[lower bound by empirical Rademacher]
\label{lemma:tv-empirical-rademacher-lower}
	For all $m\in\N$, $\delta\in(0,1)$, and $\dist\in\Delta_\N$, we have that $$\tv{\empdist - \dist} \geq \frac{1}{2}\emprad_m(\X) - 3\sqrt{\frac{\log\frac{2}{\delta}}{m}}$$ holds with probability at least $1-\delta$.
\end{lemma}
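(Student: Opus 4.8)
The plan is to descend from $\tv{\empdist-\dist}$ to the empirical Rademacher complexity in three standard steps: (i) a bounded-difference (McDiarmid) bound replacing $\tv{\empdist-\dist}$ by its expectation; (ii) a desymmetrization inequality lower-bounding $\E{\tv{\empdist-\dist}}$ by the population Rademacher complexity $\rad_m$; and (iii) a second McDiarmid bound replacing $\rad_m$ by the empirical $\emprad_m(\X)$. Each step contributes one deviation term, and at the end I reconcile the accumulated error with the stated constant.

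For step (ii) I would first establish the sample-free bound $\E{\tv{\empdist-\dist}}\ge\tfrac12\rad_m-\tfrac1{4\sqrt m}$. Starting from the representation $\tv{\empdist-\dist}=\sup_{f\in\{0,1\}^\N}\paren{\E[X\sim\dist]{f(X)}-\frac1m\sum_{i=1}^m f(X_i)}$ of \eqref{eq:tv-uc}, introduce Rademacher signs $\bsigma$ and an independent ghost sample $\X'=(X_1',\dots,X_m')\sim\dist^m$, and split
\[
\frac1m\sum_{i=1}^m \sigma_i f(X_i)\;=\;\frac1m\sum_{i=1}^m \sigma_i\paren{f(X_i)-\E[\X']{f(X_i')}}\;+\;\E[X\sim\dist]{f(X)}\cdot\frac1m\sum_{i=1}^m\sigma_i.
\]
Taking $\sup_f$ of each summand: the second contributes $\E[\bsigma]{\paren{\tfrac1m\sum_i\sigma_i}_+}=\tfrac1{2m}\E[\bsigma]{\abs{\sum_i\sigma_i}}\le\tfrac1{2\sqrt m}$; the first, after pulling the ghost sample inside by Jensen, dropping $\sigma_i$ via the usual $X_i\leftrightarrow X_i'$ symmetrization, and splitting $\frac1m\sum_i(f(X_i)-f(X_i'))$ into two centred empirical processes (each with supremum equal in expectation to $\E{\tv{\empdist-\dist}}$), is at most $2\E{\tv{\empdist-\dist}}$. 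Hence $\rad_m\le 2\E{\tv{\empdist-\dist}}+\tfrac1{2\sqrt m}$, which rearranges to the displayed bound (this is Wainwright-type desymmetrization; one may instead cite it, losing only in the additive constant).

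For steps (i) and (iii), note that both $\X\mapsto\tv{\empdist-\dist}$ and $\X\mapsto\emprad_m(\X)$ have the bounded-difference property with constant $1/m$ per coordinate: changing one $X_i$ moves $\empdist$ by $1/m$ at two atoms, hence moves $\tv{\empdist-\dist}$ by at most $1/m$; and for every fixed $\bsigma$ one has $\abs{\sup_f\tfrac1m\sum_t\sigma_t f(X_t)-\sup_f\tfrac1m\sum_t\sigma_t f(X_t')}\le\tfrac1m$ whenever $\X,\X'$ differ in only one coordinate, a bound preserved under $\E[\bsigma]{\cdot}$. McDiarmid's inequality then gives, each with probability at least $1-\delta/2$,
\[
\tv{\empdist-\dist}\;\ge\;\E{\tv{\empdist-\dist}}-\sqrt{\tfrac{\log(2/\delta)}{2m}}\qquad\text{and}\qquad\rad_m\;\ge\;\emprad_m(\X)-\sqrt{\tfrac{\log(2/\delta)}{2m}},
\]
the second because $\rad_m=\E{\emprad_m(\X)}$.

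On the intersection of these two events (probability at least $1-\delta$ by a union bound), chaining the three inequalities gives
\[
\tv{\empdist-\dist}\;\ge\;\tfrac12\rad_m-\tfrac1{4\sqrt m}-\sqrt{\tfrac{\log(2/\delta)}{2m}}\;\ge\;\tfrac12\emprad_m(\X)-\tfrac32\sqrt{\tfrac{\log(2/\delta)}{2m}}-\tfrac1{4\sqrt m},
\]
so it only remains to check $\tfrac32\sqrt{\log(2/\delta)/(2m)}+\tfrac1{4\sqrt m}\le 3\sqrt{\log(2/\delta)/m}$: the first summand equals $\tfrac{3}{2\sqrt2}\sqrt{\log(2/\delta)/m}$, and since $\delta<1$ forces $\log(2/\delta)>\log 2$, the second is at most $\tfrac1{4\sqrt{\log 2}}\sqrt{\log(2/\delta)/m}$, whence the sum sits comfortably below $3\sqrt{\log(2/\delta)/m}$. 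I expect the one mildly delicate point to be the desymmetrization of step (ii) — cleanly peeling off the $\E[X\sim\dist]{f(X)}\cdot\tfrac1m\sum_i\sigma_i$ term and bounding it by $\tfrac1{2\sqrt m}$, and confirming the $1/m$ Lipschitz constant for $\emprad_m(\X)$; the rest is routine bookkeeping, and the slack in the final display shows the constant $3$ is far from tight.
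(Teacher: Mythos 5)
Your proof is correct, but it takes a genuinely different route from the paper's. The paper reduces the whole high-probability desymmetrization step to a single citation of Wainwright's Proposition~4.12 (a high-probability lower bound on the supremum of absolute deviations in terms of $\rad_m$), which yields $\tv{\empdist-\dist}\ge\tfrac12\rad_m-\tfrac1{2\sqrt m}-\sqrt{2\log(2/\delta)/m}$ with probability $1-\delta/2$, and then applies McDiarmid once to pass from $\rad_m$ to $\emprad_m(\X)$. You instead prove the desymmetrization \emph{in expectation} from first principles --- peeling off the $\E[X\sim\dist]{f(X)}\cdot\tfrac1m\sum_i\sigma_i$ term (bounded by $\tfrac1{2\sqrt m}$ via symmetry and Cauchy--Schwarz/Khintchine) and running the ghost-sample argument on the centred part, using closure of $\{0,1\}^\N$ under complementation to identify both halves with $\E{\tv{\empdist-\dist}}$ --- and then apply McDiarmid twice, once to $\tv{\empdist-\dist}$ and once to $\emprad_m(\X)$; your $1/m$ bounded-differences claims for both functionals are right. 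What each approach buys: the paper's is shorter by outsourcing the concentration-plus-desymmetrization to a cited result, at the cost of a worse additive term ($\sqrt{2\log(2/\delta)/m}$ from Wainwright versus your $\sqrt{\log(2/\delta)/(2m)}$ from a direct McDiarmid step); your self-contained route accumulates only $\tfrac1{4\sqrt m}+\tfrac32\sqrt{\log(2/\delta)/(2m)}$, which is exactly the improved constant the paper states without proof in the remark following this lemma. Your final arithmetic verifying that this sits below $3\sqrt{\log(2/\delta)/m}$ (using $\log(2/\delta)\ge\log 2$) is also correct.
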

\begin{proof}
The proof is closely based on \cite[Proposition~4.12]{wainwright2019hds}, which states:
Let $\boldsymbol{Y}=(Y_1, \dots , Y_m) \sim \bnu^m$ for some distribution $\bnu$ on $\mathcal{Z}$, let $\mathcal{G}\subseteq [-b,b]^\mathcal{Z}$ be a function class, and let $\boldsymbol{\sigma} = (\sigma_1, \dots, \sigma_m)
\sim\Unif(\set{-1,1}^m)$.
Then
\beqn
\label{eq:wrpro4.12}
\sup_{g\in \mathcal{G}} \abs{\E[Y\sim \bnu]{g(Y)}  - \frac{1}{m}\sum_{i=1}^{m}g(Y_i)}
\geq \frac{1}{2}\E[\boldsymbol{\sigma},\boldsymbol{Y}] {\sup_{g\in \mathcal{G}} \abs{\frac{1}{m}\sum_{i=1}^{m} \sigma_i g(Y_i)}} 
- \frac{\sup_{g\in \mathcal{G}}\abs{\E[Y \sim \bnu]{g(Y)}} }{2 \sqrt{m}} 
- \delta
\eeqn
holds with probability at least $1-e^{-\frac{n\delta^2}{2b^2}}$.
Plugging in $\X$ for $\boldsymbol{Y}$, $\dist$ for $\bnu$, $\N$ for $\mathcal{Z}$, $1$ for $b$, and $\mathcal{F} \eqdef \left\{\mathbb{I}_A | A \subseteq \N \right\} = \{0,1\}^\N$ for $\mathcal{G}$ in (\ref{eq:wrpro4.12}) together with observing that
\begin{equation*}
\begin{split}
\tv{\empdist - \dist} \eqdef \sup_{A \subseteq \N} \left(\dist(A) - \empdist(A) \right) = 
\sup_{f\in \mathcal{F}} \abs{\E[X\sim \dist]{f(X)}  - \frac{1}{m}\sum_{i=1}^{m}f(X_i)}
,
\\
\E[\boldsymbol{\sigma},\X]{\sup_{f\in \mathcal{F}}\abs{\frac{1}{m} \sum_{i=1}^m \sigma_i f(X_i)}}
\geq \rad_m, \quad \text{ and } \quad
\sup_{f\in \mathcal{F}}\abs{\E[X \sim \dist]{f(X)}} = 1, 
\end{split}
\end{equation*}
followed by some algebraic manipulation we get
\beqn
\label{eq:lowertvrad}
\tv{\empdist - \dist}
\geq \frac{1}{2}\rad_m
- \frac{1}{2 \sqrt{m}} 
- \sqrt{\frac{2 \log \frac{2}{\delta}}{m}}
\eeqn
with probability at least $1- \delta/2$.
Applying McDiarmid's inequality to the 
$1/m$-bounded-differences function
$\emprad_m(\X)$
(similar to \cite[Eq. (3.14)]{mohri2012ml})
we get:
\beqn
\label{eq:lowerrademprad}
\frac{1}{2}\rad_m \geq 
\frac{1}{2}\emprad_m(\X)
- \frac{1}{2}\sqrt{\frac{ \log \frac{2}{\delta}}{2m}}
\eeqn
with probability at least $1- \delta/2$.
To conclude the proof, combine (\ref{eq:lowertvrad}) and (\ref{eq:lowerrademprad}) with the union bound to get:
\beq
\label{eq:lowertvemprad}
\tv{\empdist - \dist} \geq 
\frac{1}{2}\emprad_m(\X)
- \frac{1}{2 \sqrt{m}} 
- \frac{1}{2}\sqrt{\frac{ \log \frac{2}{\delta}}{2m}}
- \sqrt{\frac{2 \log \frac{2}{\delta}}{m}}
\eeq
with probability at least $1-\delta$, and use the fact $ - \frac{1}{2 \sqrt{m}} -\frac{1}{2}\sqrt{\frac{ \log \frac{2}{\delta}}{2m}}
- \sqrt{\frac{2 \log \frac{2}{\delta}}{m}} \geq -3\sqrt{\frac{\log\frac{2}{\delta}}{m}}$ for all $ m\in \N, \delta\in \paren{0,1}$
.
\end{proof}

\begin{remark}
  We note that by using a more careful analysis, the constants of Theorem~\ref{thm:emp-LB} can be improved
  to yield, 
  under the same assumptions, $\tv{\empdist - \dist} \geq  \frac{1}{2}
  \emprad_m(\X) -\frac{1}{4\sqrt{m}} - \frac{3}{2}\sqrt{\frac{\log\frac{2}{\delta}}{2m}}$
  with probability at least $1-\delta$.
\end{remark}

\subsection{Proof of Theorem~\ref{thm:Phi-exp}}

Invoking Fubini's theorem, we write
$$\frac{1}{\sqrt{m}} \E{\hfnrm{\empdist}^{1/2}} = \frac{1}{m} \sum_{i=1}^{\infty} \E[X \sim \Bin(m,\dist(i))]{ \sqrt{X}}.$$
Since $X\in\set{0,1,2,\ldots}$, we have $\sqrt X\le X$ and hence 
$\E{ \sqrt{X}} \leq \E{ X}$.
On the other hand,
Jensen's inequality
implies
$\E{ \sqrt{X}} \leq \sqrt{\E{X}}$,
whence
\beqn
\label{eq:hfnrm-lambda}
\frac{1}{\sqrt{m}} \E{\hfnrm{\empdist}^{1/2}} &\leq& \frac{1}{m}  \sum_{i=1}^{\infty} \min\{\sqrt{m{\dist}(i)},m{\dist}(i)\} \\
&=& \sum_{i:\ {\dist}(i) \leq  1/m} {\dist}(i) + \frac{1}{\sqrt{m}}  \sum_{i:\ {\dist}(i) > 1/m} \sqrt{{\dist}(i)} \;\leq\; 2\Lambda_m(\dist)
.
\eeqn
The high-probability bound follows from applying McDiarmid's inequality to the 
$2/m$-bounded-differences function: for all $\delta \in \paren{0,1}$,
we have
$$
\Phi_m(\empdist)
\leq
\E{\Phi_m(\empdist)} + 
\sqrt{\log({1}/{\delta})/{m}}.
$$
\QED

\subsection{Statement and proof of the refined version of Theorem~\ref{theorem:minimax-brief}}

\begin{theorem}
\label{theorem:minimax-long}
There is a universal constant $C>0$ such that
for all $\Lambda \geq 2$ and $0 < \eps, \delta < 1$,
the MLE verifies the following optimality property:
For all $\dist \in \Delta_\N$ with
$\nrm{\disttr{2 \eps \delta /9}}_{1/2} \leq \Lambda$,
if $(X_1, \dots, X_m) \sim \dist^{m}$ 
and $m \geq \frac{C}{\eps^2} \max \set{ \Lambda, \ln \delta^{-1} }$,
then
$\tv{\empdist - \dist} < \eps$ holds with probability at least $1 - \delta$.

On the other hand, for all $\Lambda \geq 2$ and $0 < \eps < 1/16, 0 < \delta < 1$,
for {\em any} estimator
$\bar\bmu \colon \N^m \to \Delta_\N$
there is a
$\dist \in \Delta_\N$ 
with $\nrm{\disttr{\eps / 18}}_{1/2} \leq \Lambda$
such that
$\bar\bmu$
must require at least $m \geq \frac{C}{\eps^2} \Lambda$ samples
in order
for $\tv{\bar\bmu - \dist} < \eps$ to hold with probability at least $3/4$,
and for {\em any} estimator $\bar{\boldsymbol{\nu}} \colon \N^m \to \Delta_\N$
there is a $\boldsymbol{\nu} \in \Delta_\N$
with $\nrm{\boldsymbol{\nu}[2 \eps \delta / 9]}_{1/2} \leq \Lambda$,
such that $\bar{\boldsymbol{\nu}}$ must require at least $m \geq \frac{C}{\eps^2} \ln{\frac{1}{\delta}}$
samples in order for $\tv{\bar{\boldsymbol{\nu}} - \boldsymbol{\nu}} < \eps$ to hold with probability at least $1 - \delta$.
\end{theorem}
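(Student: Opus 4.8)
The plan is to prove the positive part (about the MLE) and the two lower bounds essentially independently; the two lower bounds are standard once one checks the truncated-half-norm constraints, and the real work is in the positive part, where the point of the truncation is precisely that $\hfnrm{\dist}$ itself may be infinite while $\hfnrm{\disttr{\eta}}\le\Lambda$ is finite.

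\textbf{Upper bound for the MLE.} Fix $\dist$ with $\hfnrm{\disttr{\eta}}\le\Lambda$ for $\eta:=2\eps\delta/9$, and let $S=\supp(\disttr{\eta})$ be the ``head'' index set, so $\dist(S\comp)<\eta$ by definition of $T_{\dist}(\eta)$. I would split $\tv{\empdist-\dist}\le\tfrac12\sum_{i\in S}\abs{\empdist(i)-\dist(i)}+\tfrac12\bigl(\empdist(S\comp)+\dist(S\comp)\bigr)$. For the head term, using that a $\Bin(m,p)/m$ deviation has mean absolute value at most its standard deviation $\sqrt{p/m}$, one gets $\E{\tfrac12\sum_{i\in S}\abs{\empdist(i)-\dist(i)}}\le\frac{1}{2\sqrt m}\sum_{i\in S}\sqrt{\disttr{\eta}(i)}=\frac{1}{2\sqrt m}\hfnrm{\disttr{\eta}}^{1/2}\le\tfrac12\sqrt{\Lambda/m}$; since this quantity has $1/m$-bounded differences in the sample, McDiarmid upgrades it to $\tfrac12\sqrt{\Lambda/m}+\sqrt{\log(3/\delta)/(2m)}<\eps/2$ with probability $\ge1-\delta/3$ once $m\ge\frac{C}{\eps^2}\max\set{\Lambda,\log(1/\delta)}$. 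For the tail term, $\dist(S\comp)<\eta$ holds deterministically, while $\E{\empdist(S\comp)}=\dist(S\comp)<\eta$, so Markov's inequality gives $\empdist(S\comp)<\eps-\eta$ (hence the tail term $<\eps/2$) with probability $\ge1-\frac{2\delta}{9-2\delta}\ge1-\delta/3$; this is exactly where the constant in $\eta=2\eps\delta/9$ is spent. A union bound then yields $\tv{\empdist-\dist}<\eps$ with probability $\ge1-\delta$.

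\textbf{Lower bound, the $\Lambda$ term.} This is the classical support-size lower bound: take $d=\floor{\Lambda}\ge2$ and the standard Assouad/Fano family of near-uniform distributions on $[d]$ obtained by $\pm c\eps$ perturbations of paired atoms, which are pairwise $\Omega(\eps)$-separated in TV and have pairwise per-coordinate KL divergence $O(\eps^2)$; Fano's inequality then forces $m=\Omega(d/\eps^2)=\Omega(\Lambda/\eps^2)$ for any estimator to attain TV error $<\eps$ with probability $\ge3/4$ (this is where $\eps<1/16$ is used, to keep the perturbed distributions valid and well separated). The truncated-half-norm constraint is free: each member is supported on $\le d\le\Lambda$ atoms, so $\hfnrm{\disttr{\eps/18}}\le\nrm{\disttr{\eps/18}}_0\le d\le\Lambda$ by the inequality $\hfnrm{\cdot}\le\nrm{\cdot}_0$.

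\textbf{Lower bound, the $\log(1/\delta)$ term, and the main obstacle.} Here I would use a two-point (Le Cam) argument: $\bnu_0=(\tfrac12+\eps,\tfrac12-\eps)$ and $\bnu_1=(\tfrac12-\eps,\tfrac12+\eps)$ on $\set{1,2\}$, so $\tv{\bnu_0-\bnu_1}=2\eps$, $\kl{\bnu_0}{\bnu_1}=O(\eps^2)$, and since the truncation is vacuous, $\hfnrm{\bnu_b[2\eps\delta/9]}=\hfnrm{\bnu_b}\le2\le\Lambda$. An estimator with TV error $<\eps$ and confidence $1-\delta$ against both hypotheses gives, by the triangle inequality, a binary test distinguishing $\bnu_0^m$ from $\bnu_1^m$ with error $\le\delta$ on each side, and the data-processing inequality for KL yields $m\,\kl{\bnu_0}{\bnu_1}\ge\kl{\mathrm{Ber}(1-\delta)}{\mathrm{Ber}(\delta)}=\Omega(\log(1/\delta))$, i.e.\ $m=\Omega(\log(1/\delta)/\eps^2)$; the regime of $\delta$ near $1$ is subsumed by the previous paragraph with $\Lambda=2$. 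I expect the genuine difficulty to lie not in any single step but in the bookkeeping of the positive part — splitting the $\eps$ and $\delta$ budgets between head and tail, reconciling the $2\eps\delta/9$ truncation level with both the Markov estimate on $\empdist(S\comp)$ and the McDiarmid deviation on the head, and folding everything (together with the two lower-bound constants, which carry the usual constant-factor slack) into a single universal $C$.
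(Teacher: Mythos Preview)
Your proposal is correct and follows essentially the same approach as the paper. Your head/tail split is exactly the paper's three-term decomposition $\calE_1+\calE_2+\calE_3$ written more compactly (your head term is $\calE_2$; your $\tfrac12\empdist(S\comp)$ and $\tfrac12\dist(S\comp)$ are $\calE_1$ and $\calE_3$), and both proofs control the head via Jensen plus McDiarmid and the stochastic tail via Markov, with the deterministic tail absorbed by the choice $\eta=2\eps\delta/9$. For the lower bounds the paper likewise uses a Varshamov--Gilbert packing of $\pm$-perturbed near-uniform distributions on $[d]$ (your ``Assouad/Fano family'') together with Tsybakov's method for the $\Lambda$ term, and a two-point Le Cam argument on biased coins for the $\log(1/\delta)$ term; your data-processing-KL version of the latter is a standard equivalent.
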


\paragraph{Minimax risk.}
For any $\Lambda \in [2, \infty), 0 < \eps, \delta < 1$, we define the minimax risk
\begin{equation*}
\label{equation:minimax-risk}
  \risk_m(\Lambda, \eps, \delta) \eqdef \inf_{\bar{\dist}} \sup_{\dist :
    \hfnrm{\disttr{2\eps \delta/9}} < \Lambda} \PR[\X \sim \dist^{m}]{\tv{\bar{\dist} - \dist} > \eps},
\end{equation*}
where the infimum is taken over all functions $\bar{\dist}: \N^m \to \Delta_\N$,
and the supremum is taken over the subset of distributions such that
$\hfnrm{\disttr{2\eps \delta/9}} < \Lambda$.

\paragraph{Upper bound.}

Let $\Lambda \in [2, \infty), 0 < \eps, \delta < 1$, 
$\dist \in \Delta_\N$, such that
$\hfnrm{\disttr{2 \eps \delta /9}} \leq \Lambda$,
$m \in \N$, $(X_1, \dots, X_m) \sim \dist$ and let $\empdist$
be the MLE.
For $\eta>0$, consider the two truncated distributions
$\disttr{\eta}$
and
$\empdisttr$,
where we define the latter as
\beq
\empdisttr(i) &\eqdef& \empdist(i) \pred{\disttr{\eta}(i) > 0 }, \qquad i \in \N.
\eeq

By the triangle inequality, $\PR{\tv{\empdist - \dist} > \eps} \leq \PR{ \calE_1 + \calE_2 + \calE_3 > \eps}$,
where
\begin{equation*}
\begin{split}
\calE_1 \eqdef \tv{\empdist - \empdisttr}, \; \calE_2 \eqdef \tv{\empdisttr - \disttr{2 \eps \delta /9}}, \; \calE_3 \eqdef \tv{\disttr{2 \eps \delta /9} - \dist}.
\end{split}
\end{equation*}
By Markov's inequality,
\begin{equation*}
\begin{split}
  \PR{\calE_1 >
    \frac{\eps}{3}} &\leq \frac{3}{\eps} \E{ \tv{\empdist - \empdisttr}}
  = \frac{3}{2 \eps} \E{\sum_{i=1}^{\infty} \abs{\empdist(i) - \empdisttr(i)}} \\
&= \frac{3}{2 \eps} \E{ \frac{1}{m} \sum_{i \in \N \colon \Pi_{\dist}(i) > T_{\dist}(\eta)} \sum_{t = 1}^{m} \pred{X_t = i}} = \frac{3}{2 \eps}  \PR{\Pi_{\dist}(X_t) > T_{\dist}(\eta)} \leq \frac{\delta}{3}.
\end{split}
\end{equation*}
Moreover, $\calE_3 = \frac{1}{2} \sum_{i > T_{\dist}(\eta)}^{\infty} \decr{\dist}(i) \leq \frac{\eps \delta}{9} \leq \frac{\eps}{3}$.
In order to apply
the union bound, 
it remains to handle $\PR{\calE_2 > \eps /3}$.
This is achieved in two standard steps.
The first follows an argument similar to that of \cite[Lemma~5]{berend2013sharp}, 
that bounds from above the quantity in expectation using Jensen's inequality,
$\E{\calE_2} \leq \frac{\hfnrm{\disttr{2 \eps \delta /9}}^{1/2}}{\sqrt{m}} \leq \sqrt{\frac{\Lambda}{m}}$.
An application of McDiarmid's inequality controls the 
fluctuations around the expectation \citep[(7.5)]{berend2013sharp} and concludes the proof.

\QED

\paragraph{Sample complexity lower bound $m = \Omega \left( \frac{\log \delta^{-1}}{\eps^2} \right)$.}

See Lemma~\ref{lemma:minimax-lower-bound-delta}.

\paragraph{Sample complexity lower bound $m = \Omega \left( \frac{\Lambda}{\eps^2} \right)$.}

Let $\eps \in (0, 1/16)$
and
$\Lambda > 2$. 
First observe that
$\Lambda / 2 \leq 2\floor{\Lambda/2} \leq \Lambda$, and $2\floor{\Lambda/2} \in 2 \N$.
As a result,
\begin{equation*}
\begin{split}
  &\risk_m(\Lambda, \eps, \delta) \stackrel{(i)}{\geq} \inf_{\bar{\dist}} \sup_{\dist :
    \hfnrm{\disttr{2\eps \delta/9}} \leq 2 \floor{\Lambda/2}} \PR[\X \sim \dist^{m}]{\tv{\bar{\dist} - \dist} > \eps} \\
  &\stackrel{(ii)}{\geq} \inf_{\bar{\dist}} \sup_{\dist \in \Delta_{2 \floor{\Lambda/2}}}
     \PR[\X \sim \dist^{m}]{\tv{\bar{\dist} - \dist} > \eps} \stackrel{(iii)}{\geq} \frac{1}{2} \left(1 -  \frac{ m C \eps^2 }{2 \floor{\Lambda/2}} \right)
  \geq \frac{1}{2} \left(1 -  \frac{ 2 m C \eps^2 }{\Lambda} \right) \\
\end{split}
\end{equation*}
where $(i)$ and $(ii)$ follow 
from taking the supremum over increasingly smaller sets, 
$(iii)$ is Lemma~\ref{lemma:minimax-lower-bound-tsybakov} 
invoked for $2 \floor{\Lambda/2} \in \N$,
and $C>0$ is a universal constant.
To conclude, $m \leq \frac{\Lambda}{4C \eps^2} \implies \risk_m(\Lambda, \eps, \delta) \geq 1/4$,
which yields the second lower bound.
\QED

Remark: The universal constant in the lower bound obtained by Tsybakov's method at Lemma~\ref{lemma:minimax-lower-bound-tsybakov} 
is suboptimal, and we give a short proof in the appendix for completeness. 
We refer the reader to the more involved methods of \citet{DBLP:conf/colt/KamathOPS15} for obtaining tighter bounds.

\subsection{Proof of Theorem~\ref{thm:Phi-opt}}

Let $d \in 2\N$ and $m \in \N$, and restrict the problem to $\dist \in \Delta_d$.
Let $\eps \in (0, 1/16)$.
By Lemma~\ref{lemma:minimax-lower-bound-tsybakov},
$\riskf_m(d, \eps) \eqdef \inf_{\bar{\dist}} \sup_{\dist \in \Delta_d} \PR{\tv{\bar{\dist} - \dist} > \eps} \geq \frac{1}{2} \left(1 -  \frac{ C m \eps^2 }{d} \right)$ for some $C>0$,
whence Markov's inequality yields
$$\frac{1}{2} \left(1 -  \frac{ C m \eps^2 }{d} \right) \leq \frac{1}{\eps} \inf_{\bar{\dist}} \sup_{\dist \in \Delta_d} \E{\tv{\bar{\dist} - \dist}}.$$
Restrict $m \geq \frac{d}{b^2}$, with $b \eqdef \sqrt{3C/16}$ and set $\eps = \sqrt{\frac{d}{3C m}}$, so that
\begin{equation}
\label{eq:minimax-lb-finite}
\inf_{\bar{\dist}} \sup_{\dist \in \Delta_d} \E{\tv{\bar{\dist} - \dist}} \geq \frac{1}{a}\sqrt{\frac{d}{m}}, \text{ where } a \eqdef \sqrt{27C}
\end{equation}

Suppose that $\theta(\sqrt{d/m}) < \frac{1}{a}\sqrt{\frac{d}{m}}$, then by hypothesis,
$$\inf_{\bar{\dist}} \sup_{\dist \in \Delta_d} \E{\tv{\bar{\dist} - \dist}} 
\leq \sup_{\dist \in \Delta_d} \E{\Psi_m(\altest)} 
\leq  \sup_{\dist \in \Delta_d} \theta \left(\E{\Phi_m} \right).$$
For $\dist \in \Delta_d$, $\E{\sqrt{\frac{\hfnrm{\empdist}}{m}}} \leq \sqrt{\frac{d}{m}}$.
It follows that
$$\sup_{\dist \in \Delta_d} \theta \left(\E{\Phi_m}\right) \leq \theta\left( \sqrt{\frac{d}{m}} \right) <  \frac{1}{a}\sqrt{\frac{d}{m}},$$
which
contradicts
\eqref{eq:minimax-lb-finite}.
We have therefore
established,
for $$r \in R \eqdef \set{\sqrt{d/m} \colon (m, d) \in \N \times 2\N, m \geq \frac{d}{b^2}},$$
the lower bound
$\theta(r) \geq r/a$. 
We extend the lower bound to the open interval $(0, b)$,
by observing that $R$ is dense in $(0, b)$ followed by a continuity argument. 
\QED

\section*{Acknowledgments}
We are thankful to Clayton Scott for the insightful conversations, 
and to the anonymous referees for their valuable comments.
This research was partially supported by
the Israel Science Foundation
(grant No. 1602/19) and Google Research.

\bibliography{bibliography}
\bibliographystyle{abbrvnat}

\appendix
\section{Analysis of the Empirical Rademacher complexity}
From Lemma~\ref{lemma:empirical-rademacher-estimates} (see also \citep[Section~7.1, Appendix E.]{scott2006}), 
we see that the Khintchine inequality already yields a control of $\emprad_m(\X)$ by $\hfnrm{\empdist}^{1/2}$ 
up to universal constants.
$$\frac{1}{2\sqrt{2}} \hfnrm{\empdist}^{1/2} \leq \emprad_m(\X) \leq  \frac{1}{2}\hfnrm{\empdist}^{1/2}.$$
Furthermore, it is possible to derive an exact expression for it, 
from the expected absolute distance of a symmetric random walk:
\begin{lemma}[Empirical Rademacher complexity, exact expression]
\label{lemma:empirical-rademacher-exact}
	Let $\X=(X_1, \dots, X_m)$ and let $\empdist$ be the empirical measure constructed from the sample $\X$.
	Then, $$\emprad_m(\X) = \frac{1}{m}\sum_{x :\ \empdist(x)>0} \frac{1}{2^{{m\empdist(x)}}}\left\lceil \frac{{m\empdist(x)}}{2}\right\rceil\binom{{m\empdist(x)}}{\lceil {m\empdist(x)}/2\rceil}.$$
\end{lemma}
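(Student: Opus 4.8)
The plan is to start from the intermediate identity already obtained midway through the proof of Lemma~\ref{lemma:empirical-rademacher-estimates}, namely
$$
m\,\emprad_m(\X) \;=\; \sum_{x:\,\empdist(x)>0}\tfrac12\,\E[\bsigma]{\Bigl|\sum_{i=1}^{m\empdist(x)}\sigma_i\Bigr|},
$$
so that the whole problem collapses to computing exactly the scalar quantity $\E\bigl|\sum_{i=1}^{n}\sigma_i\bigr|$ for a fixed positive integer $n$, with $\sigma_1,\dots,\sigma_n$ iid Rademacher. Writing $\sum_{i=1}^n\sigma_i = 2S-n$ for $S\sim\Bin(n,1/2)$ converts this into a binomial mean-absolute-deviation sum, $\E\bigl|\sum_{i=1}^n\sigma_i\bigr| = 2^{-n}A_n$ with $A_n \eqdef \sum_{k=0}^n |2k-n|\binom nk$.

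The combinatorial core is the closed form $A_n = 2n\binom{n-1}{\lfloor n/2\rfloor}$. I would obtain it by first using the symmetry $\binom nk=\binom n{n-k}$ (together with $|2k-n|=|2(n-k)-n|$) to fold the sum over the two halves, giving $A_n = 2\sum_{k>\lfloor n/2\rfloor}(2k-n)\binom nk$; the middle term, if $n$ is even, contributes $0$. Then I apply the elementary identity $(2k-n)\binom nk = n\bigl[\binom{n-1}{k-1}-\binom{n-1}{k}\bigr]$, which follows from $k\binom nk = n\binom{n-1}{k-1}$ and Pascal's rule. The folded sum now telescopes to $n\binom{n-1}{\lfloor n/2\rfloor}$, since $\binom{n-1}{n}=0$. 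A one-line factorial manipulation shows $n\binom{n-1}{\lfloor n/2\rfloor} = \lceil n/2\rceil\binom{n}{\lceil n/2\rceil}$ — both sides equal $n!\big/\bigl(\lfloor n/2\rfloor!\,\lfloor (n-1)/2\rfloor!\bigr)$ — and hence $\tfrac12\E\bigl|\sum_{i=1}^n\sigma_i\bigr| = 2^{-n}\lceil n/2\rceil\binom{n}{\lceil n/2\rceil}$. Substituting $n=m\empdist(x)$, summing over $x$ with $\empdist(x)>0$, and dividing by $m$ yields exactly the claimed expression.

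I do not expect a genuine obstacle here; the only place demanding care is the floor/ceiling bookkeeping (the even/odd split of $n$), which must be tracked so that the folded summation range and the telescoping endpoints line up correctly. One could instead quote De~Moivre's classical formula for the mean absolute deviation of a binomial, but the self-contained telescoping argument is cleaner and keeps the exact constants fully transparent, which is the point of the lemma. \QED
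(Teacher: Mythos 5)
Your proof is correct and takes essentially the same route as the paper's: both reduce the lemma, via the decomposition established in the proof of Lemma~\ref{lemma:empirical-rademacher-estimates}, to evaluating $\E[\bsigma]{\abs{\sum_{i=1}^{n}\sigma_i}}$ exactly for $n=m\empdist(x)$, and then insert the closed form $\E[\bsigma]{\abs{\sum_{i=1}^{n}\sigma_i}}=2^{1-n}\ceil{n/2}\binom{n}{\ceil{n/2}}$. The only difference is that the paper cites the known ``heads minus tails'' result for this quantity whereas you derive it self-containedly by folding and telescoping; your derivation checks out (and, as a side benefit, makes clear that the stray factor $\frac{1}{m}$ in the paper's displayed formula for $\E[\bsigma]{\abs{\sum_{i=1}^{m\empdist(x)}\sigma_i}}$ is a typo).
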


\begin{proof}
  Write $m\emprad_m (\X) =  \sum_{x :\ \empdist(x)>0} \frac{1}{2} \E[\boldsymbol{\sigma}]{ \left| \sum_{i=1}^{m\empdist(x)} \sigma_i \right| }$
  as in the proof of Lemma~\ref{lemma:empirical-rademacher-estimates}.\\
  Now, observe that $\E[\boldsymbol{\sigma}]{ \left| \sum_{i=1}^{m\empdist(x)} \sigma_i \right| }$ is the expectation value
  of the absolute distance of a 1-dimensional symmetric random walk after $m\empdist(x)$ steps,
also known as the ``heads minus tails'' process \citep{10.2307/2686609}:
$$\E[\boldsymbol{\sigma}]{ \left| \sum_{i=1}^{m\empdist(x)} \sigma_i \right|} = \frac{1}{m 2^{{m\empdist(x)}}}\left\lceil \frac{{m\empdist(x)}}{2}\right\rceil\binom{{m\empdist(x)}}{\lceil {m\empdist(x)}/2\rceil}.$$
\end{proof}

However, the above is inconvenient and involves the computation of factorials.
Leveraging delicate bounds for the central binomial 
coefficient obtained with the Wallis product in \citet{dunbar2009topics}, 
we derive the following corollary, that gives exact the first-order 
constant in terms of the \emph{half-norm}, 
makes the \emph{minus-half-norm} appear as a second dominant term,
and that is easily computable.

\begin{corollary}[Empirical Rademacher complexity, first order bound]
\label{corollary:empirical-rademacher-first-order}
Let $\X=(X_1, \dots, X_m)$ and let $\empdist$ be the empirical measure constructed from the sample $\X$.
Then writing 
$$\phi_m(\X) \eqdef \frac{\hfnrm{\empdist}^{1/2}}{\sqrt{2\pi m}},$$
it holds that
\begin{equation*}
\begin{split}
\frac{\hfnrm{\empdist}^{1/2}}{\sqrt{2\pi m}} - \frac{3}{2}\sqrt{\frac{1}{2\pi }} \frac{1}{m^{3/2}} \nrm{\empdist^+}_{-1/2}^{-1/2} \leq \emprad_m(\X) \leq \frac{\hfnrm{\empdist}^{1/2}}{\sqrt{2\pi m}} + \sqrt{\frac{1}{2\pi }} \frac{1}{m^{3/2}} \nrm{\empdist^+}_{-1/2}^{-1/2}.
\end{split}
\end{equation*}

\end{corollary}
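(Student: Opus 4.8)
The plan is to start from the exact identity of Lemma~\ref{lemma:empirical-rademacher-exact} and replace each summand by a first-order term plus an explicit remainder. Write $\hat S$ for the empirical support and $n_x \eqdef m\empdist(x) \in \N$ for $x \in \hat S$, so that Lemma~\ref{lemma:empirical-rademacher-exact} reads $m\,\emprad_m(\X) = \sum_{x \in \hat S} g(n_x)$, where $g(n) \eqdef \frac{1}{2^{n}}\lceil n/2\rceil \binom{n}{\lceil n/2\rceil}$ equals $\tfrac12\E[\bsigma]{\abs{\sum_{i=1}^{n}\sigma_i}}$, half the expected absolute value of an $n$-step $\pm1$ walk. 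Granting the per-coordinate estimate
\[
\sqrt{\frac{n}{2\pi}} \;-\; \frac{3}{2\sqrt{2\pi n}} \;\le\; g(n) \;\le\; \sqrt{\frac{n}{2\pi}} \;+\; \frac{1}{\sqrt{2\pi n}}, \qquad n \in \N,
\]
the corollary follows by summing over $x \in \hat S$, dividing through by $m$, and substituting the elementary identities $\sum_{x \in \hat S}\sqrt{n_x} = \sqrt{m}\,\hfnrm{\empdist}^{1/2}$ and $\sum_{x \in \hat S} n_x^{-1/2} = \tfrac{1}{\sqrt m}\,\nrm{\empdist^+}_{-1/2}^{-1/2}$, both immediate from $\empdist(x) = n_x/m$ and the definition of the $(-1/2)$-quasinorm.

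Thus the whole proof reduces to the displayed per-coordinate bound, which I would establish by splitting on the parity of $n$. For $n = 2k$ one has $g(2k) = \frac{k}{4^{k}}\binom{2k}{k}$, and for $n = 2k+1$ the identity $\binom{2k+1}{k+1} = \tfrac{2k+1}{k+1}\binom{2k}{k}$ gives $g(2k+1) = \frac{2k+1}{2\cdot 4^{k}}\binom{2k}{k}$; in both cases $g(n)$ is a concrete rational multiple of the central binomial coefficient $\binom{2k}{k}$. I would then feed in the Wallis-product bounds of \citet{dunbar2009topics} for $\binom{2k}{k}/4^{k}$ --- two-sided estimates of the form $\tfrac{1}{\sqrt{\pi k}}\bigl(1 - O(1/k)\bigr) \le \binom{2k}{k}/4^{k} \le \tfrac{1}{\sqrt{\pi k}}$ with explicit constants --- and, using $\sqrt{1+u}\le 1 + u/2$ and $\sqrt{1+u}\ge 1$ to pass between $\sqrt{k/\pi}$, $\sqrt{(2k+1)/(2\pi)}$ and $\sqrt{n/(2\pi)}$, read off the claimed remainder. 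The handful of small values of $n$ for which the asymptotic regime is not yet comfortable would be verified against the displayed inequality by direct computation.

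The main obstacle is squeezing the specific constants $3/2$ and $1$ out of Dunbar's bounds, which are clean but not symmetric about $1/\sqrt{\pi k}$: one must (i) re-center the ``$1/\sqrt{\pi k}$'' of the binomial estimate, since $\sqrt{n/(2\pi)}$ is $\sqrt{k/\pi}$ when $n = 2k$ but $\sqrt{(2k+1)/(2\pi)}$ when $n = 2k+1$; (ii) track the lower-order cross terms produced by this substitution so that the net deviation of $g(n)$ from $\sqrt{n/(2\pi)}$ is at most $\tfrac{1}{\sqrt{2\pi n}}$ above and $\tfrac{3}{2\sqrt{2\pi n}}$ below for \emph{every} $n$; and (iii) dispatch the initial cases by hand. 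No probabilistic input beyond Lemma~\ref{lemma:empirical-rademacher-exact} is needed --- this is a careful but routine analytic estimate --- after which the summation-and-rescaling of the first paragraph finishes the proof.
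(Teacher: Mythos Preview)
Your proposal is correct and follows essentially the same route as the paper's proof: start from the exact expression in Lemma~\ref{lemma:empirical-rademacher-exact}, reduce each summand to a multiple of the central binomial coefficient via a parity split, apply the Wallis-product bounds of \citet{dunbar2009topics}, and then sum. The one simplification you might enjoy is that with the odd case parametrized as $n=2k-1$ (rather than $2k+1$) one gets $g(2k-1)=\frac{k}{4^{k}}\binom{2k}{k}$ \emph{identically} to $g(2k)$, so a single application of Dunbar's bound covers both parities and no small-$n$ hand check is needed.
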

\begin{proof}
Let $n \in \N$, if $n = 2k, k \geq 1$,
\begin{equation*}
\begin{split}
\frac{1}{2^{n}}\left\lceil \frac{n}{2}\right\rceil\binom{n}{\lceil {n}/2\rceil} &= 
\frac{1}{4^k} k \binom{2k}{k}, \\
\end{split}
\end{equation*}
and if $n = 2k - 1, k \geq 1$, $\lceil {n}/2\rceil = k$ such that similarly,
\begin{equation*}
\begin{split}
\frac{1}{2^{n}}\left\lceil \frac{n}{2}\right\rceil\binom{n}{\lceil {n}/2\rceil} &= 
\frac{1}{2^{2k-1}} k \binom{2k-1}{k} = \frac{2}{4^k} k \frac{(2k-1)!}{k!(2k-k-1)!} \\
&= \frac{2}{4^k} k \frac{(2k)!(2k-k)}{(2k)k!(2k-k)!} = \frac{2}{4^k} k \frac{2k-k}{2k} \binom{2k}{k} 
= \frac{1}{4^k} k \binom{2k}{k}. \\
\end{split}
\end{equation*}
Moreover, from \citet[p.11]{dunbar2009topics}, for $k \geq 1$, an application of the Wallis product yields,
\begin{equation*}
\begin{split}
\frac{k}{\sqrt{\pi/2}\sqrt{2k + 1}}\left( 1 - \frac{1}{2k} \right) \leq \frac{1}{4^k} k \binom{2k}{k} \leq \frac{k}{\sqrt{\pi/2}\sqrt{2k + 1}}\left( 1 + \frac{1}{2k} \right). \\
\end{split}
\end{equation*}
If follows that when $n = 2k$,
\begin{equation*}
\begin{split}
\sqrt{\frac{n}{2\pi}}\left\{\sqrt{\frac{n}{n + 1}}\left( 1 - \frac{1}{n} \right)\right\} \leq \frac{1}{2^{n}}\left\lceil \frac{n}{2}\right\rceil\binom{n}{\lceil {n}/2\rceil} \leq \sqrt{\frac{n}{2\pi}}\left\{\sqrt{\frac{n}{n + 1}}\left( 1 + \frac{1}{n} \right)\right\}, \\
\end{split}
\end{equation*}
and for $n = 2k -1$,
\begin{equation*}
\begin{split}
\sqrt{\frac{n}{2\pi}}\left\{ \frac{n+1}{\sqrt{n(n+2)}} \left( 1 - \frac{1}{n+1} \right) \right\} \leq \frac{1}{2^{n}}\left\lceil \frac{n}{2}\right\rceil\binom{n}{\lceil {n}/2\rceil} \leq \sqrt{\frac{n}{2\pi}}\left\{ \frac{n+1}{\sqrt{n(n+2)}} \left( 1 + \frac{1}{n+1} \right) \right\} \\
\end{split}
\end{equation*}
For all $n \in \N$,
\begin{equation*}
\begin{split}
\sqrt{\frac{n}{n + 1}}\left( 1 + \frac{1}{n} \right) &\leq  \frac{n+1}{\sqrt{n(n+2)}} \left( 1 + \frac{1}{n+1} \right) \leq 1 + \frac{1}{n}, \\
\sqrt{\frac{n}{n + 1}}\left( 1 - \frac{1}{n} \right) &\geq  \frac{n+1}{\sqrt{n(n+2)}} \left( 1 - \frac{1}{n+1} \right) \geq 1 - \frac{3}{2n},
\end{split}
\end{equation*}
such that
\begin{equation*}
\begin{split}
\emprad_m(\X) &\leq \sqrt{\frac{1}{2\pi m}}\sum_{x \colon \empdist(x)>0} \sqrt{\empdist(x)}\left\{1 + \frac{1}{m \empdist(x)} \right\} \\
&\leq \phi_m(\X) + \sqrt{\frac{1}{2\pi }} \frac{1}{m^{3/2}} \nrm{\empdist^+}_{-1/2}^{-1/2}, \\
\end{split}
\end{equation*}
where we wrote
$$\nrm{\empdist^+}_{-1/2}^{-1/2} \eqdef \sum_{x \in \N} \frac{\pred{\empdist(x) > 0}}{\sqrt{\empdist(x)}},$$
and conversely,
\begin{equation*}
\begin{split}
\emprad_m(\X) &\geq \phi_m(\X) - \frac{3}{2}\sqrt{\frac{1}{2\pi }} \frac{1}{m^{3/2}} \nrm{\empdist^+}_{-1/2}^{-1/2}. \\
\end{split}
\end{equation*}

\end{proof}

\section{Auxiliary lemmas for lower bounds}
\begin{lemma}[Sample complexity lower bound $m = \Omega \left( {\log \delta^{-1}}/{\eps^2} \right)$]
\label{lemma:minimax-lower-bound-delta}
Let $\Lambda \geq 2$, $0 < \eps < 1/2$ and $0 < \delta < 1$.
For {\em any} estimator $\bar{\boldsymbol{\nu}} \colon \N^m \to \Delta_\N$
there is a $\boldsymbol{\nu} \in \Delta_\N$
with $\nrm{\boldsymbol{\nu}[2 \eps \delta / 9]}_{1/2} \leq \Lambda$,
such that $\bar{\boldsymbol{\nu}}$ must require at least $m = \Omega \left( \frac{\log \delta^{-1}}{\eps^2} \right)$
samples in order for $\tv{\bar{\boldsymbol{\nu}} - \boldsymbol{\nu}} < \eps$ to hold with probability at least $1 - \delta$.
\end{lemma}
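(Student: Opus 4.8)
The plan is to use the classical two-point (Le Cam) method, but with an exponential-in-divergence bound rather than Pinsker's inequality. Fix $\eps$ and first treat the main regime $\eps\le 1/4$ (the range $\eps\in(1/4,1/2)$ only weakens the target to $m=\Omega(\log\delta^{-1})$ and is handled identically from a fixed well-separated pair such as $(\tfrac34,\tfrac14)$ and $(\tfrac14,\tfrac34)$). Take the two distributions on $\{1,2\}$ given by $\bnu_0=(\tfrac12+\eps,\tfrac12-\eps)$ and $\bnu_1=(\tfrac12-\eps,\tfrac12+\eps)$, so that $\tv{\bnu_0-\bnu_1}=2\eps$. Since $\nrm{\bnu_i}_0=2$, we get $\hfnrm{\bnu_i}\le\nrm{\bnu_i}_0=2\le\Lambda$, and truncation only deletes atoms, hence $\hfnrm{\bnu_i[2\eps\delta/9]}\le\hfnrm{\bnu_i}\le\Lambda$; thus both candidates are admissible for every $\Lambda\ge 2$.

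Next I would reduce the estimation guarantee to binary hypothesis testing. Suppose, for contradiction, that some estimator $\bar\bnu$ satisfies $\PR[\bnu^m]{\tv{\bar\bnu-\bnu}<\eps}\ge 1-\delta$ for both $\bnu\in\{\bnu_0,\bnu_1\}$. Because $\tv{\bnu_0-\bnu_1}=2\eps$, the reverse triangle inequality makes the events $\{\tv{\bar\bnu-\bnu_0}<\eps\}$ and $\{\tv{\bar\bnu-\bnu_1}<\eps\}$ disjoint; consequently the test $\psi(\X)\eqdef\argmin_{i\in\{0,1\}}\tv{\bar\bnu(\X)-\bnu_i}$ (ties broken arbitrarily) has both type-I and type-II error probabilities at most $\delta$, so its total error is at most $2\delta$.

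Then I would apply the Bretagnolle--Huber inequality: for any test, the sum of its two error probabilities is at least $\tfrac12\exp(-\kl{\bnu_0^m}{\bnu_1^m})=\tfrac12\exp(-m\,\kl{\bnu_0}{\bnu_1})$, using tensorization of the KL divergence. Combined with the previous step this yields $2\delta\ge\tfrac12 e^{-m\kl{\bnu_0}{\bnu_1}}$, i.e. $m\ge\ln(1/(4\delta))\,/\,\kl{\bnu_0}{\bnu_1}$. Finally $\kl{\bnu_0}{\bnu_1}=2\eps\ln\tfrac{1+2\eps}{1-2\eps}\le\tfrac{8\eps^2}{1-2\eps}\le 16\eps^2$ for $\eps\le 1/4$, so any estimator meeting the stated guarantee must use $m\ge\ln(1/(4\delta))/(16\eps^2)=\Omega(\log\delta^{-1}/\eps^2)$ samples, which is the claim.

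The calculations are routine; the only bookkeeping is to note that $\ln(1/(4\delta))=\Omega(\ln(1/\delta))$ once $\delta$ is below an absolute constant (for larger $\delta$ the bound is vacuous and nothing is asserted), and to track how the pair and the asserted rate co-degrade as $\eps\uparrow 1/2$. I do not expect a genuine obstacle here; the single point worth emphasizing is that Pinsker's inequality is \emph{not} enough — it would recover the $\eps^{-2}$ factor but lose the essential $\log(1/\delta)$ factor — so an exponential-in-divergence inequality such as Bretagnolle--Huber (equivalently, a direct Chernoff/likelihood-ratio argument) is what does the work.
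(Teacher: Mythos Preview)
Your argument is correct and follows essentially the same route as the paper: a two-point Le Cam/Bretagnolle--Huber reduction on a pair of biased coins, together with the observation that any two-point distribution has half-norm at most $2\le\Lambda$ (and truncation can only shrink it). The only cosmetic difference is that the paper takes the pair $(\tfrac12,\tfrac12)$ and $(\tfrac12-\eps,\tfrac12+\eps)$, so the relevant KL is to the uniform and satisfies $\kl{\dist_\eps}{\dist_0}\le 4\eps^2$, yielding a slightly better constant than your symmetric pair; otherwise the proofs coincide.
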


\begin{proof}
The proof is standard and consists of lower bounding the difficulty of learning a biased coin. 
Recall that for $\dist_0 \eqdef (1/2, 1/2), \dist_\eps \eqdef (1/2 - \eps, 1/2 + \eps)$, 
direct computations lead to $\nrm{\dist_0 - \dist_\eps}_1 = 2\eps$, and 
$\kl{\dist_\eps}{\dist_0} = (1/2 - \eps) \ln \frac{1/2 - \eps}{1/2} + (1/2 + \eps) \ln \frac{1/2 + \eps}{1/2} \leq 4 \eps^2$, 
where $\kl{\dist_\eps}{\dist_0}$ is the KL divergence between $\dist_\eps$ and $\dist_0$.
We also verify that $\hfnrm{\dist_\eps} \leq \hfnrm{\dist_0} \leq 2 \leq \Lambda$, 
hence also for their truncated version.
From an immediate corollary of LeCam's theorem \citep[Theorem~2.2, Lemma~2.6]{tsybakov2009introduction}, 
$\risk_m(\Lambda, \eps, \delta) \geq \frac{1}{2} \expo{ - m\kl{\dist_\eps}{\dist_0}}$,
whence $m \leq \frac{1}{4 \eps^2} \log \frac{\delta^{-1}}{2} \implies \risk_m(\Lambda, \eps, \delta) \geq \delta$.
\end{proof}

\begin{lemma}
\label{lemma:minimax-lower-bound-tsybakov}
Let $d \in 2\N, d \geq 16, m \in \N, \eps \in (0, 1/16)$, and let
\begin{equation*}
\begin{split}
\riskf_m(d, \eps) \eqdef \inf_{\bar{\dist}} \sup_{\dist :
    \dist \in \Delta_d} \PR[\X \sim \dist^{m}]{\tv{\bar{\dist} - \dist} > \eps},
\end{split}
\end{equation*}
where the infimum is taken over all $\bar{\dist} \colon [d]^m \to \Delta_d$. Then
there is a universal $C>0$
such that
\begin{equation*}
\begin{split}
\riskf_m(d, \eps) \geq \frac{1}{2} \left(1 -  \frac{ C m \eps^2 }{d} \right).
\end{split}
\end{equation*}
\end{lemma}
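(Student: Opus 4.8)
The plan is to prove the folklore $\Omega(d/\eps^2)$ minimax lower bound by the standard multiple--hypothesis argument (the cube construction together with Fano's inequality, i.e.\ ``Tsybakov's method''), which, as the remark notes, costs a suboptimal absolute constant. Throughout I set $k=d/2\ge 8$ and fix the perturbation level $\eta=8\eps$, which lies in $(0,1/2)$ precisely because $\eps<1/16$. I would partition $[d]$ into the pairs $\{2j-1,\,2j\}$, $j\in[k]$, and for each sign vector $\tau\in\{-1,+1\}^k$ define $\dist_\tau\in\Delta_d$ by $\dist_\tau(2j-1)=(1+\tau_j\eta)/(2k)$ and $\dist_\tau(2j)=(1-\tau_j\eta)/(2k)$. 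A one-line computation records the three facts I will need: each $\dist_\tau$ is a genuine probability vector on $[d]$; $\tv{\dist_\tau-\dist_{\tau'}}=(\eta/k)\,\rho_H(\tau,\tau')$, where $\rho_H$ is Hamming distance; and $\kl{\dist_\tau}{\dist_{\tau'}}=(\eta/k)\,\rho_H(\tau,\tau')\,\ln\tfrac{1+\eta}{1-\eta}\le 4\eta^2=256\,\eps^2$, using $\ln\tfrac{1+\eta}{1-\eta}\le 4\eta$ for $\eta\le 1/2$ and $\rho_H\le k$. Then, by the Gilbert--Varshamov bound, I would pass to a subcode $\mathcal{C}\subseteq\{-1,+1\}^k$ with minimum Hamming distance strictly greater than $k/4$ and size $N=\abs{\mathcal{C}}\ge 2^{k}\Big/\sum_{i=0}^{\lfloor k/4\rfloor}\binom{k}{i}\ge 2^{c_0 k}$ for a universal $c_0>0$; by the second fact, the distributions $\{\dist_\tau:\tau\in\mathcal{C}\}$ are pairwise \emph{strictly} more than $2\eps$-separated in total variation --- here $\eps<1/16$ is exactly what makes $\eta/4=2\eps$.

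Next I would reduce estimation to testing. For an arbitrary $\bar{\dist}\colon[d]^m\to\Delta_d$ set $\hat\tau(\X)=\argmin_{\tau\in\mathcal{C}}\tv{\bar{\dist}(\X)-\dist_\tau}$. Since the codewords are pairwise more than $2\eps$-apart, the event $\tv{\bar{\dist}(\X)-\dist_\tau}\le\eps$ forces $\hat\tau(\X)=\tau$ by the triangle inequality; hence, lower-bounding the supremum over $\Delta_d$ by the average over $\tau\sim\Unif(\mathcal{C})$,
\[
\riskf_m(d,\eps)\;\ge\;\inf_{\hat\tau}\;\PR[\tau\sim\Unif(\mathcal{C}),\,\X\sim\dist_\tau^m]{\hat\tau(\X)\neq\tau}.
\]
Fano's inequality bounds the right side below by $1-\tfrac{I(\tau;\X)+\log 2}{\log N}$, and convexity of the Kullback--Leibler divergence together with the third fact gives $I(\tau;\X)\le\max_{\tau,\tau'\in\mathcal{C}}m\,\kl{\dist_\tau}{\dist_{\tau'}}\le 256\,m\eps^2$. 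Since $N\ge 2^{c_0k}$ with $k=d/2\ge 8$, the term $(\log 2)/\log N$ is at most $\tfrac12$ (for the few small even values $16\le d\le 24$ one uses the exact Gilbert--Varshamov count instead of the entropy estimate of $\log N$ --- this is where the hypothesis $d\ge 16$ enters), so
\[
\riskf_m(d,\eps)\;\ge\;\tfrac12-\tfrac{256\,m\eps^2}{\log N}\;\ge\;\tfrac12\Big(1-\tfrac{Cm\eps^2}{d}\Big)
\]
for a universal $C$, where the last step uses $\log N\ge c_0 k=c_0 d/2$ (and when the right side is negative there is nothing to prove).

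The main obstacle is calibrating the perturbation level $\eta$, since two requirements pull against each other. The reduction needs the hypothesis family to be pairwise \emph{strictly} more than $2\eps$-separated in TV, which forces $\eta\gtrsim\eps$ and, through the Gilbert--Varshamov packing radius, a code of minimum distance of order $k$; but Fano needs the pairwise divergences to be $O(\eps^2)$, which forces $\eta\lesssim\eps$ (and, separately, $\eta<1/2$, so that the logarithmic KL estimate is clean). The choice $\eta=8\eps$ is the narrow window where both hold, and it is open precisely when $\eps<1/16$. One should also note why it is essential to use many hypotheses here rather than Assouad's lemma per coordinate: the latter routes through a Pinsker step and produces a bound of the form $\tfrac12\bigl(1-O(\sqrt{m\eps^2/d})\bigr)$, whereas Fano aggregates the information --- total $I(\tau;\X)=O(m\eps^2)$ against $\log N=\Omega(d)$ --- so that the ratio $m\eps^2/d$ appears linearly, which is what the displayed form requires. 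Once the construction is in place, matching $\tfrac12(1-Cm\eps^2/d)$ and chasing the (deliberately loose) constant $C$ is only bookkeeping.
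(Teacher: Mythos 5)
Your proposal is correct and follows essentially the same route as the paper: a Varshamov--Gilbert packing of a paired-perturbation family in $\Delta_d$, pairwise TV separation $>2\eps$ with pairwise KL of order $\eps^2$, and a Fano-type (Tsybakov) reduction from estimation to testing, yielding $\tfrac12(1-Cm\eps^2/d)$. The only differences are cosmetic (signs $\{-1,+1\}$ versus $\{0,1\}$ and perturbation $8\eps$ versus $16\eps$), and your constant-tracking and the remark on why Fano rather than Assouad is needed are both sound.
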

\begin{proof}
As is customary in Analysis, the universal constant $C>0$ may change its value from expression to expression.
Consider the family of distributions 
$$\mathcal{D}(d) \eqdef \set{ \dist^{(\bsigma)} \in \Delta_d, \bsigma \in \set{ 0,1 }^{d / 2}},$$
where 
\begin{equation*}
\dist^{(\bsigma)} \eqdef \frac{1}{d} \left( 1 + 16 \eps \sigma_1, 1 - 16 \eps \sigma_1, 1 + 16 \eps \sigma_2, 1 - 16 \eps \sigma_2, \dots, 1 + 16 \eps \sigma_{d/2}, 1 - 16 \eps \sigma_{d/2} \right).
\end{equation*}
From the Varshamov-Gilbert bound \citep[Lemma~2.9]{tsybakov2009introduction},
there exists a
$\tilde{\mathcal{D}}(d) \subsetneq \mathcal{D}(d)$ 
satisfying
$(a)$
$\abs{\tilde{\mathcal{D}}(d)} > 2^{d/16}$,
$(b)$
for $\dist^{(\bsigma)}, \dist^{(\bsigma')} \in \tilde{\mathcal{D}}(d)$, 
$ \bsigma \neq \bsigma' \implies \tv{\dist^{(\bsigma)} - \dist^{(\bsigma')}} \geq 2 \eps$, 
and
$(c)$
$\dist^{(\zero)}  \in \tilde{\mathcal{D}}(d)$.
It is straightforward to
verify that $\kl{\dist^{(\bsigma)}}{\dist^{(\zero)}} \leq C \eps^2$. 
Applying Tsybakov's method \citep[Theorem~2.5]{tsybakov2009introduction},
\begin{equation*}
\begin{split}
\riskf_m(d, \eps) &\geq \inf_{\bar{\dist}} \sup_{\dist \in \tilde{\mathcal{D}}(d)} \PR{\tv{\bar{\dist} - \dist} > \eps} \\
& \geq \frac{1}{2} \left(1 -  \frac{\frac{4 m}{\abs{\tilde{\mathcal{D}}(d)}} \sum_{\dist^{(\bsigma)} \in \tilde{\mathcal{D}}(d)} \kl{\dist^{(\bsigma)}}{\dist^{(\zero)}}}{\ln \abs{\tilde{\mathcal{D}}(d)}} \right) \\
\end{split}
\end{equation*}
so that $\riskf_m(d, \eps) \geq \frac{1}{2} \left(1 -  \frac{ C m \eps^2 }{d} \right)$.
\end{proof}

\section{Convergence properties of the empirical bound}

In this section, we briefly analyze convergence of $\frac{1}{\sqrt{m}}\hfnrm{\empdist}^{1/2}$.
In Proposition~\ref{proposition:as-convergence} we confirm that the quantity converges
\emph{almost surely} and in \emph{$L_1$}, but with Proposition~\ref{proposition:arbitrary-slow},
with show that this convergence can be \emph{arbitrarily slow}.

\begin{proposition}[$L_1$ and almost sure convergence]
\label{proposition:as-convergence}
	Let $\dist \in \Delta_\N$ and let $\X \eqdef (X_1, \dots, X_m) \sim \dist^{m}$.
	Then, $\frac{1}{\sqrt{m}}
        \hfnrm{\empdist}^{1/2} \xrightarrow{L_1} 0$ and $\frac{1}{\sqrt{m}}
        \hfnrm{\empdist}^{1/2} \xrightarrow{\text{a.s.}} 0$. \\
\end{proposition}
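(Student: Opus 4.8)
The plan is to deduce both modes of convergence from the estimates on $\Phi_m(\empdist)$ already available in Theorem~\ref{thm:Phi-exp}, once we have checked the elementary fact that $\Lambda_m(\dist)\to 0$ as $m\to\infty$ for every fixed $\dist\in\Delta_\N$. Granting that, the $L_1$ statement is immediate and the almost sure statement follows from the high-probability half of Theorem~\ref{thm:Phi-exp} together with Borel--Cantelli.

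First I would verify that $\Lambda_m(\dist)\to 0$, splitting $\Lambda_m(\dist)$ into its two summands from \eqref{eq:lamdef}. The first sum is the $\dist$-mass of $A_m\eqdef\set{j:0<\dist(j)<1/m}$; the sets $A_m$ are decreasing in $m$ with empty intersection (each positive atom eventually exceeds $1/m$), so $\dist(A_m)\to 0$ by continuity of the measure from above (equivalently, dominated convergence with dominating function $\dist$). For the second sum $\frac1{2\sqrt m}\sum_{j:\dist(j)\ge 1/m}\sqrt{\dist(j)}$ the crude Cauchy--Schwarz bound only yields $1/2$, so I would fix $\eta>0$, pick a finite $F\subseteq\N$ with $\dist(F\comp)<\eta^2$, and split over $F$ and $F\comp$. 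The part over $F$ is at most $\abs{F}/(2\sqrt m)\to 0$; for the part over $F\comp$ there are at most $m\,\dist(F\comp)<m\eta^2$ indices with $\dist(j)\ge 1/m$, so Cauchy--Schwarz gives $\sum_{j\in F\comp:\dist(j)\ge1/m}\sqrt{\dist(j)}\le\sqrt{m\eta^2\cdot\dist(F\comp)}\le\sqrt m\,\eta^2$, a contribution of at most $\eta^2/2$. Hence $\limsup_m\Lambda_m(\dist)\le\eta^2/2$ for all $\eta>0$, i.e. $\Lambda_m(\dist)\to 0$.

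The $L_1$ convergence is then immediate: since $\Phi_m(\empdist)=\frac1{\sqrt m}\hfnrm{\empdist}^{1/2}\ge 0$, we have $\nrm{\Phi_m(\empdist)}_{L_1}=\E{\Phi_m(\empdist)}\le 2\Lambda_m(\dist)\to 0$ by Theorem~\ref{thm:Phi-exp}. For almost sure convergence, work on a single probability space carrying an iid sequence $X_1,X_2,\dots\sim\dist$, with $\empdist$ the empirical measure of the first $m$ samples. Applying the high-probability bound of Theorem~\ref{thm:Phi-exp} with $\delta_m\eqdef m^{-2}$ gives, with probability at least $1-m^{-2}$,
\[
\Phi_m(\empdist)\;\le\;2\Lambda_m(\dist)+\sqrt{\tfrac{2\log m}{m}}.
\]
Since $\sum_m m^{-2}<\infty$, Borel--Cantelli implies that almost surely this inequality holds for all sufficiently large $m$; as the right-hand side tends to $0$ and $\Phi_m(\empdist)\ge 0$, we get $\Phi_m(\empdist)\to 0$ almost surely.

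The only mildly delicate point — and the main obstacle — is the verification that the second summand of $\Lambda_m(\dist)$ vanishes: the naive Cauchy--Schwarz estimate is insufficient, and one must use tightness of $\dist$ through the finite-set approximation $F$. Everything else is a routine combination of the expectation and concentration bounds of Theorem~\ref{thm:Phi-exp} with the first Borel--Cantelli lemma.
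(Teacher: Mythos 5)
Your argument is correct and follows essentially the same route as the paper: bound $\E{\Phi_m(\empdist)}$ by $2\Lambda_m(\dist)$ via Theorem~\ref{thm:Phi-exp}, show $\Lambda_m(\dist)\to 0$, and upgrade to almost sure convergence by McDiarmid-type concentration plus Borel--Cantelli (you invoke the high-probability half of Theorem~\ref{thm:Phi-exp} with $\delta_m=m^{-2}$, the paper applies McDiarmid directly with a fixed deviation; the two are interchangeable). The only substantive addition is that you prove $\Lambda_m(\dist)\to 0$ from scratch --- correctly, via the finite-set truncation and the counting bound $\#\set{j\in F\comp:\dist(j)\ge 1/m\}}\le m\,\dist(F\comp)$ --- whereas the paper simply cites \citet[Lemma~7]{berend2013sharp} for this fact.
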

\begin{proof}
For $L_1$ convergence the proof is as follows:
\begin{align*}
    \lim_{m \to \infty} \E{\left|\frac{1}{\sqrt{m}}
      \hfnrm{\empdist}^{1/2} - 0\right|} 
      & =\lim_{m \to \infty} \E{\frac{1}{\sqrt{m}} \hfnrm{\empdist}^{1/2}} \\
    & \leq \lim_{m \to \infty} 2\Lambda_m(\dist) 
    \hspace{50.1mm} \text{(Theorem~\ref{thm:Phi-exp})} \\
    & = 0. \hspace{25.3mm} \text{(\cite[Lemma~7]{berend2013sharp})}
\end{align*}

Now, for almost sure convergence, recall that
$\frac{1}{\sqrt{m}}\hfnrm{\empdist}^{1/2}$
satisfies $2/m$-bounded-differences.
By the $L_1$ convergence established above,
we have that for all $\eps>0$ there is
an
$M_{\eps} \in \N$ s.t.
for all
$m\geq M_{\eps}$,
we have
$\E{\frac{1}{\sqrt{m}}
  \hfnrm{\empdist}^{1/2}} \leq \eps/2$.
Invoking McDiarmid's inequality, for every $m \geq M_{\eps}$, we have
\begin{align*}
  \mathbb{P}\left( \frac{1}{\sqrt{m}}
  \hfnrm{\empdist}^{1/2} \geq \eps/2 \right) \leq \exp\left( -\frac{m\eps^2}{2} \right)
  .
\end{align*}

Thus,
\begin{align*}
  \sum_{m = 1}^{\infty} \mathbb{P}\left( \frac{1}{\sqrt{m}}
  \hfnrm{\empdist}^{1/2} \geq  \eps/2 \right) &
 \leq M_{\eps} + \sum_{m = M_{\eps}}^{\infty} \exp\left( -\frac{m\eps^2}{2} \right) < \infty.
\end{align*}

An application of the Borel-Cantelli lemma completes the proof:
$$
\frac{1}{\sqrt{m}} \hfnrm{\empdist}^{1/2}  \xrightarrow{\text{a.s.}} 0
.
$$

\end{proof}

To formalize our idea of arbitrarily slow convergence,
we adapt the terminology developed in \citet{deutsch2010slow1,deutsch2010slow2}.
We begin with the set of all $[0,1]$-valued sequences 
that converge to $0$:
$$\mathcal{U} \eqdef \{ U\in [0,1]^\N:
\lim_{m \rightarrow \infty} U(m) = 0 \}.$$
Following
\citet[Definition~2.7]{deutsch2010slow1},
we will say that the statistic $\hat{\theta}_m \colon \N^m \to [0, 1]$ \emph{converges arbitrarily slowly to $0$ in $L_1$}
if
\begin{enumerate}
	\item $\forall \mu \in \Delta_\N$, $\lim_{m \rightarrow \infty }\E{\hat{\theta}_m} = 0$,
	\item $\forall U \in \mathcal{U}, \exists \mu \in \Delta_\N$ such that $\forall m \in \N, \E{\hat{\theta}_m} \geq U(m)$.
\end{enumerate}
It turns out
\citep[Remark~2.8, Theorem~2.9]{deutsch2010slow2}
that restricting the set $\mathcal{U}$ to the {\em decreasing} sequences,
$$\mathcal{U}^{\downarrow} \eqdef \{
U\in[0,1]^\N
:
\sup_{m\in\N}U(m+1)/U(m)\le1,
\lim_{m \rightarrow \infty} U(m) = 0 \},$$
does not change the above definition of arbitrarily slow convergence.

\begin{proposition}[Arbitrary slow convergence in $L_1$]
\label{proposition:arbitrary-slow}
  For any sequence $1>r_1>r_2>\dots$ decreasing to $0$, there is a distribution $\dist\in \Delta_\N$ such
  that $2\E[\X\sim \dist^{m}]{\tv{\dist-\empdist}} > r_m$ for all $m\geq1$.
\end{proposition}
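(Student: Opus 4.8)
The plan is to exploit the lower bound $2\E{\tv{\dist-\empdist}} \ge \frac{1}{2\sqrt2}\E{\Phi_m(\empdist)}$ (available up to the additive $O(1/\sqrt m)$ term from Theorem~\ref{thm:emp-LB}, or more cleanly via the symmetrization lower bound through $\rad_m$) together with the known fact that $\E{\Phi_m(\empdist)}$ is comparable to $\Lambda_m(\dist)$, and then to invoke the arbitrary-slow-convergence machinery of \citet{berend2013sharp}: their Lemma~7 (or its proof) shows that $\Lambda_m(\dist)$ can be made to decay arbitrarily slowly. More directly, it suffices to show that for any decreasing $1>r_1>r_2>\cdots\to0$ there is a $\dist\in\Delta_\N$ whose $\E{\tv{\dist-\empdist}}$ dominates $r_m/2$; since $\E{\tv{\dist-\empdist}}\ge \tfrac14\Lambda_m(\dist)-\tfrac1{4\sqrt m}$ by \eqref{eq:BK}, it is enough to construct $\dist$ with $\Lambda_m(\dist)$ decaying slower than any prescribed rate, and absorb the $1/\sqrt m$ slack.

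First I would reduce to a clean target: given the sequence $(r_m)$, pass to a decreasing sequence $(s_m)\in\mathcal{U}^{\downarrow}$ with $s_m$ dominating $r_m$ but still $s_m \to 0$ slowly enough, and aim to build $\dist$ supported on a sequence of ``uniform blocks.'' Concretely, partition $\N$ into consecutive blocks $B_1,B_2,\dots$ where block $B_k$ has size $N_k$ and carries total mass $p_k$, distributed uniformly, so each atom in $B_k$ has mass $p_k/N_k$; here $\sum_k p_k = 1$. For such a $\dist$, one computes directly that $\Lambda_m(\dist) = \sum_{k:\,p_k/N_k < 1/m} p_k + \frac{1}{2\sqrt m}\sum_{k:\,p_k/N_k\ge1/m} N_k\sqrt{p_k/N_k} = \sum_{k:\,N_k>mp_k}p_k + \frac{1}{2\sqrt m}\sum_{k:\,N_k\le mp_k}\sqrt{p_k N_k}$. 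By choosing the block sizes $N_k$ growing fast relative to $p_k$ (so that for $m$ in a long range only the ``cheap'' first sum contributes the tail mass $\sum_{k\ge k(m)}p_k$, which we force to decay as slowly as desired), the quantity $\Lambda_m(\dist)$ is pinned below by a tail sum of the $p_k$'s, and that tail sum realizes any prescribed slow rate.

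The key steps in order: (1) normalize the given $(r_m)$ to a decreasing sequence and, using \citet[Remark~2.8, Theorem~2.9]{deutsch2010slow2}, reduce to exhibiting a single $\dist$ with $\E{\tv{\dist-\empdist}}$ above a prescribed $U\in\mathcal U^{\downarrow}$; (2) fix the block construction and the explicit choice of $(p_k,N_k)$ — e.g.\ take $p_k$ so that $\sum_{j> k}p_j$ matches (a constant times) the target rate evaluated at the index where block $k$ ``turns on,'' and take $N_k$ large enough that atoms in block $k$ stay below threshold $1/m$ for all $m$ up to that index; (3) verify the lower bound $\Lambda_m(\dist)\ge \sum_{k> k(m)} p_k$ for the relevant range of $m$ from the formula above, hence $\Lambda_m(\dist)$ decays no faster than the target; (4) combine with $\E{\tv{\dist-\empdist}}\ge\tfrac14\Lambda_m(\dist)-\tfrac1{4\sqrt m}$, and note that because the target decays slower than $1/\sqrt m$ (we may always arrange this by slowing $(r_m)$ further, which only strengthens the claim), the subtracted term is negligible and $2\E{\tv{\dist-\empdist}}>r_m$ for all $m\ge1$ after a final adjustment of constants / shifting finitely many blocks.

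I expect the main obstacle to be the bookkeeping in step (2)–(3): one must choose $(p_k)$ and $(N_k)$ simultaneously so that (a) the $p_k$ sum to one, (b) the tail sums $\sum_{j>k}p_j$ track the prescribed slow rate, and (c) the block sizes are large enough that, for each $m$, the blocks with index $>k(m)$ genuinely fall in the ``$\dist(i)<1/m$'' regime of \eqref{eq:lamdef} so their full mass is counted — while earlier blocks contribute only nonnegatively. Getting a clean monotone index function $k(m)$ and checking the threshold inequality $p_k/N_k<1/m \iff N_k > m p_k$ holds on the right ranges is the delicate part; everything else (Fubini/Jensen-type estimates, absorbing additive $1/\sqrt m$ terms, the Deutsch–Hochstadt reduction to $\mathcal U^\downarrow$) is routine.
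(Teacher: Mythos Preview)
Your route through $\Lambda_m$ and the \eqref{eq:BK} lower bound is quite different from the paper's, which is far more direct: it simply observes that
\[
2\E{\tv{\dist-\empdist}} \;=\; \sum_i \E{|\dist(i)-\empdist(i)|} \;\ge\; \sum_i \dist(i)\,\PR{\empdist(i)=0} \;=\; \E{U_m},
\]
where $U_m=\dist(\N\setminus\{X_1,\dots,X_m\})$ is the missing mass, and then invokes \citet[Proposition~4]{berend2012missingmass}, which already shows that $\E{U_m}$ can be made to dominate any prescribed decreasing sequence $r_m\downarrow 0$. No block construction, no $\Lambda_m$, no additive slack to manage.

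Your approach, by contrast, has a genuine gap at small $m$ that the ``final adjustment of constants / shifting finitely many blocks'' cannot close. The issue is that $\Lambda_m(\dist)\le 1$ for every $\dist$ (since for $\dist(j)\ge 1/m$ one has $\tfrac{1}{2\sqrt m}\sqrt{\dist(j)}\le \dist(j)$), so the \eqref{eq:BK} lower bound satisfies $\tfrac14\Lambda_m-\tfrac1{4\sqrt m}\le \tfrac14-\tfrac1{4\sqrt m}$, which is at most about $0.073$ at $m=2$ and is not even stated for $m=1$. Hence whenever $r_m$ is, say, $0.9$ for the first several values of $m$ --- which the proposition must allow --- no choice of $\dist$ can make the \eqref{eq:BK} route yield $2\E{\tv{\dist-\empdist}}>r_m$ there. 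Your remark that ``the target decays slower than $1/\sqrt m$ \ldots\ the subtracted term is negligible'' conflates the asymptotic regime with the finite-$m$ requirement; for the finitely many small $m$ where $r_m$ is close to $1$, the subtracted term is not negligible relative to the best possible $\tfrac14\Lambda_m\le\tfrac14$.

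This is repairable --- for those finitely many $m$ one can argue directly that a sufficiently ``flat'' $\dist$ (all atoms much smaller than $1/m$) makes $2\E{\tv{\dist-\empdist}}$ arbitrarily close to $2$, hence $>r_m<1$ --- but that direct argument is essentially the missing-mass bound the paper uses, and it is not part of your outline. If you want to keep your block-construction route, you must add a separate treatment of the initial segment of $m$'s that does not go through \eqref{eq:BK}.
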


\begin{proof}

\begin{align*}
  2\E{ \tv{\dist - \empdist}}
  &=
  \E{ \nrm{ \dist - \empdist }_1} \\
& = \sum_{i=1}^{\infty} \E{ \abs{ \dist(i) - \empdist(i) }}\\
  & = \sum_{i=1}^{\infty}
  \E{ \pred{\empdist(i) > 0}\abs{ \dist(i) - \empdist(i) } +  \pred{\empdist(i) = 0}\abs{ \dist(i) - \empdist(i) }}\\
  & = \sum_{i=1}^{\infty} \E{ \pred{\empdist(i) > 0}\abs{ \dist(i) - \empdist(i) }} +
  \sum_{i=1}^{\infty} \E{\pred{\empdist(i) = 0}\dist(i)}\\
  & = \sum_{i=1}^{\infty} \E{ \pred{\empdist(i) > 0}\abs{ \dist(i) - \empdist(i) }} +
  \E{\sum_{i:\ \empdist(i) = 0} \dist(i)}\\
& \geq \E{\sum_{i:\ \empdist(i) = 0} \dist(i)} = \E{\dist\paren{\N\setminus \{X_1, \dots X_m\}}}\\
& = \E{U_m}
,
\end{align*}
where $U_m \eqdef \dist\paren{\N\setminus \{X_1, \dots X_m\}}$ 
is the missing mass random variable.
From \cite[Proposition~4]{berend2012missingmass}, we have that:
For any sequence $1>r_1>r_2>\dots$ decreasing to $0$, there is a distribution $\dist\in \Delta_\N$ such
  that $\E{U_m} > r_m$ for all $m\geq1$.
\end{proof}
 
\begin{remark}
  To our knowledge,
  the above result
  is the first to establish
  a
  connection between the TV risk
  $\tv{\dist - \empdist}$ and the missing mass $U_m$.
\end{remark}

\end{document}